\newcommand{\R}{\mathbb{R}}
\newcommand{\N}{\mathbb{N}}
\newcommand{\PP}{\mathbb{P}}
\newcommand{\Z}{\mathbb{Z}}
\newcommand{\Per}{\mbox{Per}}
\newcommand{\nrm}[1]{\|#1\|}
\newcommand{\Mod}[1]{\vert{#1}\vert}
\newcommand{\intpart}[1]{\lfloor {#1} \rfloor}
\newcounter{main}
\newtheorem{theorem}{Theorem}[section]
\newtheorem{proposition}[theorem]{Proposition}
\newtheorem{lemma}[theorem]{Lemma}
\newtheorem{remark}{Remark}[section]
\newcommand{\blanksquare}{\,\,\,$\sqcup\!\!\!\!\sqcap$}
\newenvironment{proof}{{\flushleft {\bf Proof: }}}{\blanksquare}
\newcounter{example}
{{\stepcounter{example}}{\flushleft {\bf Example \arabic{example}:}}}%
{\par}
\title{\textbf{Abundance of elliptic dynamics on conservative $3$-flows}}
\author{ M\'{a}rio Bessa \thanks{Supported by FCT-FSE, SFRH/BPD/20890/2004. }
\space and Pedro Duarte }
\begin{document}

\maketitle

\begin{abstract}
We consider a compact $3$-dimensional boundaryless Riemannian manifold $M$ and the set of divergence-free (or zero divergence) vector fields without singularities, then we prove that this set has a $C^1$-residual (dense $G_{\delta}$) such that any vector field inside it is Anosov or else its elliptical orbits are dense in the manifold $M$. This is the flow-setting counterpart of Newhouse's Theorem 1.3~\cite{N}. Our result follows from two theorems, the first one is the $3$-dimensional continuous-time version of a theorem of Xia~\cite{X} and says that if $\Lambda$ is a hyperbolic invariant set for
some class $C^1$ zero divergence vector field $X$ on $M$, then either $X$ is Anosov, or else $\Lambda$ has empty interior. The second one is a version, for our $3$-dimensional class, of Theorem 2 of Saghin-Xia ~\cite{SX} and says that, if $X$ is not Anosov, then for any open set $U\subseteq M$ there exists $Y$ arbitrarily close to $X$ such that $Y^t$ has an elliptical closed orbit through $U$.
\end{abstract}

\noindent\emph{MSC 2000:} Primary: 37D20, 37D30, 37C20; Secondary: 37C27.\\
\emph{keywords:} Hyperbolic sets, Dominated splitting, Volume-preserving flows.\\

\section{Introduction and statement of the results}
In the beginning of the 1980s Ma\~{n}\'{e} proved (\cite{M4}) that $C^1$-robust tran\-si\-ti\-vi\-ty of diffeomorphisms in surfaces lead to hyperbolicity. One of the main ideas to prove this result was the construction of $C^0$-perturbations of $2$-dimensional linear systems (the dynamical cocycle) over the periodic points. He proved that under the presence of a weak form of hyperbolicity it is not possible to perturb a periodic point in order to obtain a matrix with one Lyapunov exponent zero (eigenvalue with real part equal to one). Ma\~{n}\'{e}'s result was extended for $3$-flows by Doering (\cite{D}), for 3-diffeomorphisms by D\'{i}az-Pujals-Ures (\cite{DPU}) obtaining partial hyperbolicity instead of hyperbolicity, for $n$-diffeomorphisms $n\geq{4}$ by Bonatti-D\'{i}az-Pujals (\cite{BDP}) obtaining an even weaker form of hyperbolicity (dominated splitting) and finally the $n$-flows version of this last result by Vivier (\cite{V}) and its conservative (or volume-preserving) counterpart (\cite{BR}). 

If we consider the $C^1$-generic dynamics of systems which preserves a given volume form, then the ergodic theoretic results in the vein of these previous ones are the proof of the Bochi-Ma\~{n}\'{e} Theorem (\cite{M2,B}), its generalization to higher dimensional diffeomorphisms (\cite{BV2}) and the $3$-flows counterpart of Bochi-Ma\~{n}\'{e}'s theorem (\cite{B2}). 

We recall that Arbieto-Matheus (\cite{AM}) proved that $C^1$-robust transitive volume-preserving $3$-flows must be Anosov with the help of a very usefull perturbation lemma for zero divergence (or divergence-free) vector fields and also that  Ali-Horita (\cite{HT}) proved that robustly transitive symplectomorphisms must be partially hyperbolic. 

All these results are supported, as we already said, in $C^1$-perturbations of the action of the tangent map (or flow) along orbits. We also find other examples of $C^1$-type results by going back to the seventies and recall a theorem of Newhouse (\cite{N}) fundamental in the generic theory of conservative diffeomorphism in surfaces. Newhouse's theorem says that $C^{1}$-generic area-preserving diffeomorphisms in surfaces are Anosov or else the elliptical points are dense. Note that the proof is strongly supported in the symplectic structure. Later, in the $4$-dimensional symplectomorphisms context, Arnauld presented in~\cite{A} a refined version of~\cite{N}. Recently, in~\cite{SX}, Saghin-Xia generalized Arnauld result for the multidimensional symplectic case. Our aim is to obtain similar results in the context of volume-preserving $3$-flows.

\medskip

We end this introduction describing our main results here.
We start by ge\-ne\-ra\-li\-zing Proposition 1.1 of~\cite{X} to the context of
$3$-dimensional $C^1$ zero divergence vector fields.

Unless we say something in contrary in  this paper $M$ will always denote a $3$-dimensional compact, connected, boundaryless  and smooth Riemannian manifold.

\begin{theorem}\label{intvazio}
Let $\Lambda$ be a hyperbolic invariant set for
some class $C^1$ zero divergence vector field $X$ on $M$.
Then either $X$ is Anosov on $M$, or else $\Lambda$
has empty interior.
\end{theorem}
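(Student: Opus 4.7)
Suppose $\Omega := \text{int}(\Lambda)$ is non-empty.  Since $\Omega$ is $X^t$-invariant and $\bar\Omega \subseteq \Lambda$ is closed, $\bar\Omega$ is itself a compact hyperbolic set of $X$ with the splitting $E^s \oplus \langle X \rangle \oplus E^u$ and hyperbolic constants inherited from $\Lambda$.  The plan is to prove $\bar\Omega$ is open; being also closed, connectedness of $M$ then forces $\bar\Omega = M$, so $\Lambda = M$ and $X$ is Anosov.

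The heart of the matter is to show that, for a uniform $\epsilon>0$ and every $q \in \bar\Omega$, the local invariant arcs $W^{ss}_\epsilon(q)$ and $W^{uu}_\epsilon(q)$ lie in $\bar\Omega$.  I first handle hyperbolic periodic orbits $p \in \Omega$: for any $r \in W^{uu}(p)$, iterating by the period $T$ gives $X^{-nT}(r) \to p$, so for large $n$ the point $X^{-nT}(r)$ sits in a ball around $p$ contained in $\Omega$, and invariance of $\Omega$ forces $r \in \Omega$; hence $W^{uu}(p) \subset \Omega$, and by symmetry $W^{ss}(p) \subset \Omega$.  Since $X$ is divergence-free, Poincar\'e recurrence implies that almost every point of $\Omega$ is non-wandering; Anosov's closing lemma applied in the hyperbolic set $\bar\Omega$ then furnishes hyperbolic periodic orbits of $X$ inside $\Omega$ that are dense in $\bar\Omega$.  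Given $q \in \bar\Omega$ and periodic $p_n \to q$ with $p_n \in \Omega$, the $C^1$-continuous dependence of the local strong manifolds on base points of a compact hyperbolic set lets one pass the inclusions $W^{uu}_\epsilon(p_n) \subset \Omega$ to the limit, yielding $W^{uu}_\epsilon(q) \subset \bar\Omega$; likewise $W^{ss}_\epsilon(q) \subset \bar\Omega$.

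The conclusion is a standard local product structure.  At $q \in \bar\Omega$ take a small $2$-disc $\Sigma$ transverse to $X$; the slices $W^{ss}_\epsilon(q) \cap \Sigma$ and $W^{uu}_\epsilon(q) \cap \Sigma$ are transverse $C^1$-arcs in $\Sigma$, and for any $a \in W^{ss}_\epsilon(q)$, $b \in W^{uu}_\epsilon(q)$ close to $q$ the transverse intersection $W^{uu}_\epsilon(a) \cap W^{ss}_\epsilon(b) \cap \Sigma$ is a single point that, by the previous paragraph applied to $a, b \in \bar\Omega$, lies in $\bar\Omega$.  Letting $(a,b)$ vary and flowing by $X^t$ for $|t|$ small yields an open neighbourhood of $q$ in $M$ entirely inside $\bar\Omega$, so $q \in \text{int}(\bar\Omega)$.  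I expect the hardest step to be the saturation $W^{uu}(p) \subset \Omega$ for periodic $p \in \Omega$ coupled with the density of such orbits: both rely decisively on divergence-freeness, and indeed without it a hyperbolic invariant set with non-empty interior can exist without the flow being Anosov (any trapping neighbourhood of a hyperbolic attractor provides such an example).
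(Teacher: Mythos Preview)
Your argument is correct in substance and takes a genuinely different route from the paper. The paper proceeds indirectly: it first perturbs $X$ to a nearby $C^2$ divergence-free field $X_1$ (using Zuppa's density theorem), invokes the $C^2$ result that for a non-Anosov conservative flow every hyperbolic set has zero Lebesgue measure (a continuous-time version of a Bochi--Viana theorem), applies Szpilrajn's theorem to deduce that the perturbed hyperbolic set $\Lambda_1$ has topological dimension $<3$, and finally transfers this back via the structural-stability homeomorphism $\Lambda\cong\Lambda_1$. Your approach stays entirely in the $C^1$ category and uses only Poincar\'e recurrence, the Anosov closing lemma on the hyperbolic set $\bar\Omega$, and the continuous dependence of local strong manifolds --- it is more elementary and self-contained, whereas the paper's route is shorter once one grants the cited machinery and yields a measure-zero statement as a byproduct.

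One point in your write-up does need repair. In the local product structure step you treat $W^{ss}_\epsilon(q)\cap\Sigma$ and $W^{uu}_\epsilon(q)\cap\Sigma$ as transverse arcs in $\Sigma$ and claim $W^{uu}_\epsilon(a)\cap W^{ss}_\epsilon(b)\cap\Sigma$ is a single point. But the strong manifolds here are one-dimensional curves in the three-manifold $M$: their intersection with a two-disc $\Sigma$ is generically a single point, not an arc, and two one-dimensional arcs $W^{uu}_\epsilon(a)$ and $W^{ss}_\epsilon(b)$ in $M$ generically do not meet at all. The fix is to pass to the two-dimensional weak (flow-saturated) manifolds $W^{s}_\epsilon(\cdot)=\bigcup_{|t|<\delta}X^t\bigl(W^{ss}_\epsilon(\cdot)\bigr)$ and $W^{u}_\epsilon(\cdot)$; since $\bar\Omega$ is flow-invariant, the inclusion $W^{ss}_\epsilon(a)\subset\bar\Omega$ immediately upgrades to $W^{s}_\epsilon(a)\subset\bar\Omega$, and then $W^{u}_\epsilon(a)\cap W^{s}_\epsilon(b)$ is a short orbit arc in $\bar\Omega$ meeting $\Sigma$ in a single point, exactly as you need. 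Equivalently, one may argue that the continuous map $(a,s,t)\mapsto X^t\bigl(W^{uu}_\epsilon(a)(s)\bigr)$, with $a$ ranging over $W^{ss}_\epsilon(q)$, is locally injective and hence has open image by invariance of domain.
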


Denote by $\mu$ the Lebesgue measure induced by the Riemannian volume form on $M$.
Let $\mathfrak{X}^{1}_{\mu}(M)$ denote the set of all class $C^{1}$
zero divergence  vector fields, i.e., those which preserve $\mu$. 
We assume that $\mathfrak{X}^{1}_{\mu}(M)$ is
endowed with the $C^{1}$ topology. 
This topology is defined by a norm, which we shall denoted by $\|\cdot\|_{C^1}$.
The subset of $\mathfrak{X}^{1}_{\mu}(M)$,
formed by those vector fields without singularities, will be
denoted here by $\mathfrak{X}^{1}_{\mu}(M)^{*}$.  
With this notation, we prove that:

\begin{theorem}\label{main}
Given $\epsilon>0$, $p\in M$,
$U\subseteq M$ an open set with $p\in U$, and 
$X\in \mathfrak{X}^{1}_{\mu}(M)^{*}$ a vector field
which is not  Anosov,
there is $Y\in \mathfrak{X}^{1}_{\mu}(M)^{*}$
such that  $\| X-Y\|_{C^1} < \epsilon$,\, and $Y$ has an elliptic
closed orbit through $U$.
\end{theorem}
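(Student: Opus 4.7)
\bigskip

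\noindent\textbf{Proof plan for Theorem \ref{main}.}

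The plan is to follow the Newhouse--Arnauld--Saghin--Xia strategy, suitably adapted to the continuous-time, divergence-free, three-dimensional setting. The two ingredients one needs in tandem are (i) a closing lemma to produce a closed orbit through $U$ and (ii) a Franks-type cocycle perturbation lemma to bend the eigenvalues of the Poincar\'e return map onto the unit circle. First, since $\mathfrak{X}^{1}_{\mu}(M)^{*}$ supports a conservative closing lemma (the Arbieto--Matheus perturbation lemma gives the divergence-free version of Pugh's closing lemma), I would perturb $X$ to some $X_{0}$ with $\|X-X_{0}\|_{C^{1}}<\epsilon/2$ possessing a closed orbit $\gamma_{0}$ that meets $U$. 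If $\gamma_{0}$ is already elliptic we are done, so assume its Poincar\'e return map $P_{0}$ has real eigenvalues.

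Next I would attempt to perturb $X_{0}$ to some $Y$, still $\epsilon$-close to $X$, for which the closed orbit (the continuation of $\gamma_{0}$, or a new nearby orbit obtained by the same closing construction) has a Poincar\'e return map with a pair of complex conjugate eigenvalues of modulus one. Because $X_{0}$ is divergence-free and the Poincar\'e return map is area-preserving on the two-dimensional normal section, it lives in $SL(2,\mathbb{R})$, so the elliptic condition is open in the $C^{1}$-topology. The construction is via an $SL(2,\mathbb{R})$ cocycle perturbation argument in the style of Bochi--Vi\-ana and Bessa, together with the divergence-free Franks lemma of Bessa--Rocha: given any desired small perturbation of the linear Poincar\'e flow along finitely many points of $\gamma_{0}$, one can realise it as the derivative of a $C^{1}$-nearby $Y\in \mathfrak{X}^{1}_{\mu}(M)^{*}$ supported on a thin flow-box tube around $\gamma_{0}$, provided the tube is disjoint from $M\setminus U$ only up to distance comparable to the injectivity radius.

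The only obstruction to rotating the eigenvalues of $P_{0}$ onto $S^{1}$ is the presence of a dominated splitting $N^{s}\oplus N^{u}$ for the linear Poincar\'e flow along $\gamma_{0}$, of uniform domination strength bounded below by a constant depending on $\epsilon$. Thus, if we were unable to produce an elliptic closed orbit through $U$ for any $Y$ in the $\epsilon$-ball around $X$, then every closed orbit produced by the closing lemma through $U$ would carry a uniformly dominated splitting. Applying the closing lemma along a dense set of points of $U$ and invoking the fact (standard in this area and already used in the proof of the conservative $3$-flow version of Bochi--Ma\~n\'e \cite{B2}) that robust domination along periodic orbits extends by semicontinuity to an invariant dominated splitting on the closure, I would obtain a dominated splitting for the linear Poincar\'e flow on a closed invariant set $\Lambda\subset M$ containing $U$. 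In dimension three, a dominated splitting of the normal bundle is automatically a hyperbolic splitting for volume-preserving flows (this is where the zero-divergence hypothesis is used in an essential way: the determinant of the linear Poincar\'e flow is identically one, so any domination forces one bundle to contract and the other to expand exponentially). Hence $\Lambda$ is a hyperbolic set with nonempty interior, and Theorem \ref{intvazio} forces $X_{0}$, and thus $X$ by openness, to be Anosov, contrary to hypothesis.

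The main obstacle I anticipate is the second step: engineering a divergence-free $C^{1}$-small perturbation whose time-one map realises a prescribed rotation of the cocycle along the long periodic orbit $\gamma_{0}$. The flow-box perturbation must simultaneously respect three constraints --- divergence zero, $C^{1}$-smallness, and support in a thin tube --- while rotating the transverse derivative by a definite angle at many returns. This is exactly the reason the argument was delicate in \cite{SX} in the symplectic case, and here one has to replace symplectic rotations by a careful construction of a divergence-free vector field in a flow-box whose Poincar\'e return is a prescribed small rotation, which is the heart of the adaptation of the Arbieto--Matheus and Bessa--Rocha perturbation machinery to this problem.
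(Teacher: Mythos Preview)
Your plan is essentially the paper's own strategy, and the endgame---dominated splitting on the closure of periodic orbits through $U$ forces hyperbolicity in dimension three, then Theorem~\ref{intvazio} yields a contradiction---is exactly what the paper does. Two points deserve sharpening.

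First, the phrase ``applying the closing lemma along a dense set of points of $U$'' is dangerous as written: each application of the closing lemma produces a \emph{different} perturbed vector field, and you cannot directly assemble periodic orbits of distinct vector fields into an invariant set for any single one. The paper avoids this by making a \emph{single} preliminary perturbation into the residual set $\mathcal{KS}\cap\mathcal{P}$ (Kupka--Smale intersected with Pugh's general density residual), so that the new $X$ already has $\overline{\Per(X)}=M$. All subsequent reasoning is then about the periodic orbits of this one vector field. You should do the same; otherwise the passage from ``every closed orbit we can close through $U$ is dominated'' to ``there is a closed invariant set $\Lambda\supset U$ with dominated splitting'' has no content.

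Second, the paper does not treat the cocycle perturbation as a single ``Franks lemma vs.\ domination'' dichotomy. It splits into two mechanisms: Proposition~\ref{propA} (if the angle $\measuredangle(N^u,N^s)$ drops below a threshold $\theta(\epsilon)$ somewhere along the orbit, a single small rotation already makes it elliptic, via Lemma~\ref{rot}) and Proposition~\ref{propB} (if the angle stays $\geq\theta$ but the orbit fails $m$-domination at some point, a sequence of disjointly supported time-one perturbations exchanges the bundles and then equalises eigenvalues). Only when \emph{both} fail---large angle everywhere \emph{and} $m$-domination---does one land in the hyperbolic case. Your sentence ``the only obstruction is dominated splitting'' is morally right but hides this two-case structure, which is precisely where the technical work you flag in your last paragraph lives.

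A minor correction: the linear Poincar\'e flow is not literally area-preserving; rather $|\det P_X^t(p)|\cdot\|X(X^t(p))\|=\|X(p)\|$. Over a full period this gives determinant one, which is what you need, but the intermediate statement should be adjusted.
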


In order to obtain the previous theorem, the main tool is to
adapt the perturbation techniques developed in~\cite{B1,B2} to the periodic context together with the Pasting Lemma in~\cite{AM}. From this result we are able to prove a 
dichotomy which is the continuous-time version of Newhouse's theorem in~\cite{N}.

\begin{theorem}\label{main_theor}
There exists a $C^1$-residual subset $\mathcal{R}\subset \mathfrak{X}^{1}_{\mu}(M)^{*}$
such that,
if $X\in{\mathcal{R}}$ then $X$ is Anosov or 
else the elliptical periodic points of $X^t$ are dense in $M$. 
\end{theorem}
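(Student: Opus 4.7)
The plan is to deduce Theorem \ref{main_theor} from Theorem \ref{main} by the standard Baire category argument that produces residual dichotomies. Fix a countable basis $\{U_n\}_{n\in\N}$ for the topology of $M$, and for each $n$ set
\[
\mathcal{E}_n \;=\; \{Y\in\mathfrak{X}^{1}_{\mu}(M)^{*} : Y^t \text{ has an elliptic closed orbit meeting } U_n\},
\]
and $\mathcal{A}=\{Y\in\mathfrak{X}^{1}_{\mu}(M)^{*}:Y \text{ is Anosov on }M\}$. I would then define
\[
\mathcal{R} \;=\; \bigcap_{n\in\N}\bigl(\mathcal{A}\cup\mathcal{E}_n\bigr)
\]
and check that this set does the job.

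The first step is to show that $\mathcal{A}$ and each $\mathcal{E}_n$ are open in the $C^1$ topology. For $\mathcal{A}$ this is the standard robustness of uniform hyperbolicity for flows. For $\mathcal{E}_n$ I would argue as follows: if $\gamma$ is an elliptic closed orbit of $Y$ meeting $U_n$, take a transverse section $\Sigma$ at a point $p\in\gamma\cap U_n$ and the associated Poincaré first-return map $P_Y:\Sigma\to\Sigma$, which, because $Y$ is divergence-free, is area-preserving on $\Sigma$. Ellipticity means that $DP_Y(p)$ has a pair of complex conjugate eigenvalues $e^{\pm i\theta}$ with $\theta\notin\pi\Z$; in particular $1$ is not an eigenvalue of $DP_Y(p)$. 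The implicit function theorem then yields a fixed point $p(Z)$ of $P_Z$ depending continuously on $Z$ in a $C^1$-neighbourhood of $Y$, and $DP_Z(p(Z))$ depends continuously on $Z$, so the complex eigenvalues persist (they cannot collide on the real axis without first passing through $\pm 1$, which is excluded for small perturbations). By taking the neighbourhood small enough the continuation $p(Z)$ stays inside $U_n$, so $Z\in\mathcal{E}_n$.

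The second step is density: I need to prove that $\mathcal{A}\cup\mathcal{E}_n$ is dense in $\mathfrak{X}^{1}_{\mu}(M)^{*}$ for each $n$. But this is immediate from Theorem \ref{main}: given $X\in\mathfrak{X}^{1}_{\mu}(M)^{*}$ and $\varepsilon>0$, if $X\in\mathcal{A}$ we are done, and otherwise $X$ is not Anosov, so applying Theorem \ref{main} to the open set $U_n$ (with any basepoint $p\in U_n$) produces $Y$ with $\|X-Y\|_{C^1}<\varepsilon$ having an elliptic closed orbit through $U_n$, i.e.\ $Y\in\mathcal{E}_n$.

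The third step is to conclude. By Baire's theorem applied to the complete metric space $\mathfrak{X}^{1}_{\mu}(M)^{*}$ (with the $C^1$ topology), the set $\mathcal{R}$ is a dense $G_\delta$. If $X\in\mathcal{R}$ and $X$ is not Anosov, then $X\notin\mathcal{A}$, hence $X\in\mathcal{E}_n$ for every $n$; since $\{U_n\}$ is a basis, this means that the elliptic closed orbits of $X^t$ intersect every non-empty open set of $M$, i.e.\ they are dense. The only non-routine ingredient is the openness of $\mathcal{E}_n$, which is the main point one must verify carefully; the rest is a formal Baire argument, with Theorem \ref{main} providing the density. (Theorem \ref{intvazio} enters only through its use in the proof of Theorem \ref{main}, not directly here.)
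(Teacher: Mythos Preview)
Your argument is correct and follows essentially the same route as the paper: the paper also derives Theorem~\ref{main_theor} from Theorem~\ref{main} via a Baire argument, using openness of the Anosov set and stability of elliptic closed orbits, intersecting countably many open dense sets indexed by a dense family of balls in $M$. The only cosmetic differences are that the paper works inside the complement $\mathcal{N}$ of the closure of Anosov systems and unions with $\mathcal{A}$ at the end, while you take the union $\mathcal{A}\cup\mathcal{E}_n$ from the start; and the paper simply asserts that elliptic orbits are stable, whereas you spell out the Poincar\'e-map continuation argument (note that the first-return map on an arbitrary transversal is not literally area-preserving, but its differential at the fixed point has determinant one by~(\ref{volume}), which is all you need).
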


\medskip

\section{Preliminaries}

We consider $C^{r}$, $r\geq 1$, class $C^1$ zero divergence vector fields 
$X:M\rightarrow{TM}$. 
Integrating any $C^1$ vector field $X$ we obtain its associated flow $X^{t}$, 
which is a $1$-parameter group of $C^1$ conservative diffeomorphisms. 
The infinitesimal generator of the flow $X^t$ is the vector field $X$ say, $\frac{dX^{t}}{dt}|_{t=s}(p)=X(X^{s}(p))$. The flow $X^{t}$ has a tangent map $DX^{t}_{p}$ which is the solution of the
non-autonomous linear differential equation
$\dot{u}(t)=DX_{X^{t}(p)}\cdot u(t)$  called the \emph{linear
variational equation}. 
Given a vector field  $X\in\mathfrak{X}^{1}_{\mu}(M)$, 
we say that $x\in M$ is a \emph{regular point}
if  $X(x)\not={0}$. Otherwise, if $X(x)=0$ then $x$ is called
a \emph{singularity} of $X$. 
We shall denote by $R$ the set of all regular points of $X$. 
Let $\mathbb{R}X(x)$ be the direction of the vector field. 
We define the normal sub-bundle at $x\in{R}$ by $N_{x}=\mathbb{R}X(x)^{\perp}$. 
Let $\Pi_{X^{t}(x)}$ be the orthogonal projection onto
$N_{X^{t}(x)}$. 
The map 
$$ \begin{array}{cccc}
P^{t}_{X}(x): & N_{x} & \longrightarrow & N_{X^{t}(x)} \\
& v & \longmapsto & \Pi_{X^{t}(x)}\circ DX^{t}(x)\cdot{v},
\end{array}  $$
is called the \emph{linear Poincar\'{e} flow} of $X$ at $x$.
Like the  tangent map $DX^{t}_{p}$, the
 linear Poincar\'{e} flow $P_{X}^{t}(x)$
is solution of the following variational equation:
$$\dot{u}(t)=\Pi_{X^{t}(x)}\circ DX_{X^{t}(p)}\cdot u(t).$$

Given a linear map $A$ let $$\|A\|:=\underset{v\not=\vec{0}}{\sup}\frac{\|A(v)\|}{\|v\|}$$ denote the norm of $A$.

We say that a compact  $X^{t}$-invariant set $\Lambda\subset{M}$,
without singularities,  is \emph{uniformly hyperbolic} for the linear Poincar\'{e} flow $P_{X}^{t}(x)$
\, if there is, over $\Lambda$, a $P_{X}^t$-invariant continuous splitting $N= N^{u}\oplus{N^{s}}$,
and there are constants $C>0$ and $\gamma\in(0,1)$ such that for all $x\in{\Lambda}$ we have:
$$\|P_{X}^{-t}(x)|_{{N}^{u}_{x}}\|\leq{C\gamma^{t}}\,,\;\text{   and also  }\; \|P_{X}^{t}(x)|_{{N}^{s}_{x}}\|\leq{C\gamma^{t}}\text{  for all }t\geq 0.$$

\begin{remark}\label{Doering}
It follows by Proposition 1.1 of~\cite{D} that the hyperbolicity for the linear Poincar\'e flow, 
over a compact set, is equivalent to the hyperbolicity of the tangent flow over the same set.
\end{remark}

Next we recall a weaker form of uniform hyperbolicity which will be very useful to prove our results. 
We say that a compact $X^{t}$-invariant set $\Lambda\subset{M}$ 
has \emph{dominated splitting}, or \emph{uniformly projective hyperbolicity}, 
for the linear Poincar\'e flow of the vector field $X$ 
if there is, over $\Lambda$, a $P_{X}^t$-invariant continuous splitting 
$N=N^{u}\oplus{N^{s}}$, and there are constants $C>0$ and $\gamma\in(0,1)$ such that for all $x\in{\Lambda}$ we have:
\begin{equation}\label{ds}
\frac{\|P_{X}^{t}(x)|_{{N}^{s}_{x}}\|}{\|P_{X}^{t}(x)|_{{N}^{u}_{x}}\|}\leq{C\gamma^{t}}\text{  for all }t\geq 0.
\end{equation}
In the presence of a dominated splitting both sub-bundles may expand.
In such case,  $N^u$ expands more than $N^s$. 
It may also happen that both sub-bundles contract, in which case  $N^u$ contracts less than $N^s$. 
For a complete description of dominated splitting structures see~\cite{BDV} section B.1. 
The following simple lemma give us an equivalent characterization of uniformly projective hyperbolicity. 
We leave the proof to the reader.
\begin{lemma}\label{equivalencia}
Let $\Lambda\subset{M}$ be a compact  $X^{t}$-invariant set.
Then\, $\Lambda$ is uniformly projectively hyperbolic \, if and only if \,
there is $m\in{\mathbb{N}}$ such that 
\begin{equation}\label{equiv}
\frac{\|P_{X}^{m}(x)|_{{N}^{s}_{x}}\|}{\|P_{X}^{m}(x)|_{{N}^{u}_{x}}\|}\leq{1/2}\text{ for all }\Lambda. 
\end{equation}
\end{lemma}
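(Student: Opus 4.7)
The plan is to prove the two implications separately, the non-trivial direction being (\ref{equiv})$\Rightarrow$(\ref{ds}). The forward direction is immediate: assuming (\ref{ds}) holds, choose any integer $m$ large enough that $C\gamma^m\leq 1/2$; then (\ref{equiv}) is satisfied at this single time.

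For the converse, I would introduce the quantity
\[
r(t,x):=\frac{\|P_X^{t}(x)|_{N^s_x}\|}{\|P_X^{t}(x)|_{N^u_x}\|},
\]
and exploit its good behavior under composition. Because the ambient manifold is $3$-dimensional and the flow direction accounts for one of the three dimensions, the normal bundle $N$ is a $2$-plane bundle, so the sub-bundles $N^u$ and $N^s$ are both one-dimensional. For one-dimensional maps the operator norm is exactly multiplicative, and since the splitting is $P_X^t$-invariant, both the numerator and denominator split multiplicatively along orbits:
\[
\|P_X^{t+s}(x)|_{N^{\sigma}_x}\|=\|P_X^{t}(X^s(x))|_{N^{\sigma}_{X^s(x)}}\|\cdot\|P_X^{s}(x)|_{N^{\sigma}_x}\|,\qquad \sigma\in\{u,s\}.
\]
Dividing these two identities gives the cocycle identity $r(t+s,x)=r(t,X^s(x))\cdot r(s,x)$.

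Now I would iterate the hypothesis. Since $r(m,\cdot)\leq 1/2$ on $\Lambda$ and $\Lambda$ is $X^t$-invariant, the cocycle identity yields $r(km,x)\leq 2^{-k}$ for every $k\in\mathbb{N}$ and every $x\in\Lambda$. For an arbitrary $t\geq 0$, write $t=km+s$ with $0\leq s<m$ and apply the cocycle identity to get
\[
r(t,x)=r(s,X^{km}(x))\cdot r(km,x)\leq C'\cdot 2^{-k},
\]
where $C':=\sup\{r(s,y):y\in\Lambda,\ 0\leq s\leq m\}$. This supremum is finite because $\Lambda$ is compact, the bundles $N^u,N^s$ are continuous, and $(s,y)\mapsto P_X^s(y)$ is continuous with $N^u_y,N^s_y$ never collapsing into the flow direction. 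Setting $C:=2C'$ and $\gamma:=2^{-1/m}\in(0,1)$, we obtain $r(t,x)\leq C\gamma^{t}$, which is precisely (\ref{ds}).

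I do not anticipate any real obstacle, since this is the standard reduction of a subexponential domination condition to a single-time check. The only place that deserves a moment of care is the exact multiplicativity of $r$ along orbits, which relies on both sub-bundles being one-dimensional; in higher dimensions one only gets submultiplicativity of the numerator and would need additional work (and possibly a different definition using conorms) to handle the denominator, but in the present $3$-dimensional setting this issue does not arise.
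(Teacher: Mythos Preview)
Your argument is correct and is precisely the standard proof of this equivalence; the paper itself omits the proof (``We leave the proof to the reader''), so there is nothing further to compare. The only cosmetic remark is that your justification for $C'<\infty$ could be phrased more simply: since $P_X^s(y)$ is a linear isomorphism and $N^u_y$ is one-dimensional, the denominator never vanishes, so $r(s,y)$ is a continuous real-valued function on the compact set $[0,m]\times\Lambda$ and is therefore bounded.
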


When the condition (\ref{equiv}) is satisfied we shall also say that 
$\Lambda$  has $m$-\emph{dominated splitting}, and we will refer to $m$ as
the \emph{domination time}.
Lemma~\ref{equivalencia} says that we may either consider the pair of constants 
 $(C,\gamma)$, or else the pair $(m,\frac{1}{2})$,
to characterize uniformly projective hyperbolicity. 

We denote by $\Per(X^{t})$ the set of all periodic points of $X^{t}$,
and by $Per_{\tau}(X^{t})$ the subset of periodic points 
with period less or equal than $\tau$. 
Given $p\in \Per(X^{t})$, the real number  
$$\lambda_{u}(p):=\underset{t\rightarrow \pm \infty}{\lim} \frac{1}{t}\log\|P_{X}^{t}(p)\|$$
is called the \emph{upper Lyapunov exponent} of the orbit $X^{t}(p)$. 
If this number is zero, then, by conservativeness, $p$ is an elliptical periodic point. 
Otherwise, the real eigenvalues $\sigma^{\pm 1}$ of the map $P_{X}^{\tau}(p)$, 
where $\tau$ is the period of $p$, satisfy  
$$e^{\lambda_{u}(p)\tau}=|\sigma|>1>|\sigma^{-1}|=e^{-\lambda_{u}(p)\tau}.$$

\noindent Let $\Per_{\text{hyp}}(X^{t})$ denote the subset of all hyperbolic periodic points
in $\Per(X^{t})$.
Note that any given periodic orbit in  $\Per_{\text{hyp}}(X^{t})$
has dominated splitting, but the domination time $m$ may be very large.
Hence, to \emph{see} the hyperbolic behavior of an orbit in $\Per_{\text{hyp}}(X^{t})$
we may have to consider  large iterates. 
The function $m:\Per_{\text{hyp}}(X^{t})\rightarrow{\mathbb{N}}$ is 
in general unbounded. 
Also, the weak hyperbolic behavior relates with the splitting angle being 
close to zero. 
Clearly, given a uniformly hyperbolic invariant set
$\Lambda\subset \overline{\Per_{\text{hyp}}(X^{t})}$,  
the splitting angle, between $N^u$ and $N^s$,
 is bounded away from zero over $\Lambda$. 
This tranversality follows from the continuity of the splitting $N =N^u\oplus N^s$, 
and the compactness of $M$.
Given a vector field $X\in\mathfrak{X}^{1}_{\mu}(M)$, we define
$$\Delta_{m}(X)=\left\{x\in \Per_{\text{hyp}}(X^t): \; \frac{\|P_{X}^{m}(x)|_{{N}^{s}_{x}}\|}{\|P_{X}^{m}(x)|_{{N}^{u}_{x}}\|}\geq{\frac{1}{2}}\right\}\;,$$ 
and
$$\Lambda_{m}(X)=\left\{x\in \Per_{\text{hyp}}(X^t): \; \frac{\|P_{X}^{m}(X^t(x))|_{{N}^{s}_{x}}\|}{\|P_{X}^{m}(X^t(x))|_{{N}^{u}_{x}}\|}\leq{\frac{1}{2}}\,,\;
\forall\, t\geq 0\; \right\}\;.$$ 
By definition, the closure of $\Lambda_{m}(X)$ has $m$-dominated splitting,
and, by conservativeness, nonexistence of singularities and the $3$-dimensionality, it is a hyperbolic invariant set. 
Of course the set $\Per(X^{t})$ decomposes as the disjoint union
of $\Lambda_{m}(X)$ with the  superset of $\Delta_{m}(X)$ saturated by the flow:\,
$\underset{t\in{\mathbb{R}}}{\cup}X^{t}(\Delta_{m}(X))$.

We recall the definition of \emph{flowbox} along a regular orbit segment.
Take $p\in \Per(X^{t})$, of period $\geq\tau$, and the standard Poincar\'{e} map based at $p$, i.e. the map, $\mathcal{P}_{X}^{t}(p)\colon\mathcal{V}_{p}\subseteq \mathcal{N}_{p}\rightarrow{\mathcal{N}_{X^{t}(p)}}$, where $\mathcal{N}_{p}$ is a surface with tangent plane $N_{p}$ and $\mathcal{V}_{p}$ is a small enough neighborhood of $p$. 
Using the implicit function theorem, we get a function 
describing the first-return-time  
of orbit trajectories from $\mathcal{V}_{p}$ to $\mathcal{N}_{p}$.
Denote it by $a:\mathcal{V}_{p}\rightarrow \mathbb{R}$. 
Given $B\subseteq\mathcal{V}_{p}$, the set $$\mathcal{F}_{X}^{\tau}(p)(B):=
\{\mathcal{P}_{X}^{t}(p)(q): \, q\in{B}, \, t=t(q)\in[0,a(q)[\, \},$$ 
is called the \emph{time-$\tau$ length flowbox} at $p$ associated to the vector field $X$.

\medskip

We end this section giving some notation for conservative linear differential systems.
Any function $A:\R\to sl(2,\R)$ of class $C^1$,
i.e., a traceless matrix function, defines a linear variational equation
\begin{equation}
\label{lve}
 \dot{u}(t)= A(t)\cdot u(t)\quad (t\in\R)\;.
\end{equation}
We denote by $\Phi_A(t) \in SL(2,\R)$
the fundamental solution of~(\ref{lve}).

\medskip

\section{Some useful perturbation lemmas}

To prove our results in the conservative setting it will be necessary to perform the perturbations in this context, so the following two  lemmas will be central:

\begin{lemma}(Conservative flowbox theorem)\label{cfbt}
Given a vector field $X\in\mathfrak{X}_{\mu}^{s}(M)$, $s\geq{2}$, and 
$\tau>0$ 
there is some $C=C(\tau)>0$ such that for any
arc of regular orbit 
$\Gamma_{X}^{\tau}(x):=\{X^t(x): 0\leq t\leq \tau\}$
of length $\tau$,
there exists a $C^s$-conservative change of coordinates $\Psi$ from $X$ into $T=(1,0,0)$, 
defined over a neighborhood $V$ of $\Gamma_{X}^{\tau}(x)$,
such that $\|\Psi\|_{C^s}\leq C$ on $V$.
\end{lemma}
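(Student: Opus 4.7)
The plan is a two-step construction: first straighten the flow with the classical (non-conservative) flowbox theorem, then correct the resulting coordinates by a Moser-type diffeomorphism on the transverse slice so that the change of coordinates becomes volume-preserving.

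For the first step, I fix a $C^s$ transverse $2$-disk $D_x \subset N_x$ of small radius $r$ at $x$ and define $\Phi_1\colon D_x \times (-\delta,\tau+\delta) \to M$ by $\Phi_1(y,t)=X^t(y)$. Smooth dependence of $X^t$ on initial conditions makes $\Phi_1$ a $C^s$ diffeomorphism onto a tubular neighborhood $V$ of $\Gamma_X^\tau(x)$ for sufficiently small $r,\delta$, and $(\Phi_1)_*\,\partial_t = X$. The size of $r,\delta$ can be taken uniform in $x$ by compactness of $M$ and because $X$ does not vanish on $\Gamma_X^\tau(x)$. In these straightening coordinates, $\omega := \Phi_1^*\mu$ is a volume form on $D_x\times(-\delta,\tau+\delta)$ which is invariant under the translation flow $\partial_t$; hence $L_{\partial_t}\omega=0$, and we may write
\[
 \omega = \rho(y_1,y_2)\, dy_1\wedge dy_2\wedge dt,
\]
with $\rho>0$ a $C^{s-1}$ function on $D_x$ that is \emph{independent of $t$}.

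For the second step, I apply the classical Moser argument on the disk $D_x$ (with boundary kept pointwise fixed, after possibly shrinking to a sub-disk) to produce a $C^s$ diffeomorphism $\phi\colon D_x\to D_x$ satisfying $\phi^*(dy_1\wedge dy_2)=\rho\, dy_1\wedge dy_2$. Lifting by the identity in the $t$-variable, $\Phi_2(y,t):=(\phi(y),t)$ preserves the vector field $\partial_t$ and transforms $dy_1\wedge dy_2\wedge dt$ into $\omega$. The composition $\Psi:=\Phi_2^{-1}\circ\Phi_1^{-1}$ is then a $C^s$ conservative change of coordinates defined on $V$ that sends $X$ into the constant field $T=(1,0,0)$.

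The $C^s$-norm estimate is obtained by combining Gr\"onwall-type bounds for $\Phi_1$ and its derivatives up to order $s$ over the interval $[0,\tau]$ (giving a bound depending on $\tau$ and $\|X\|_{C^s}$) with the standard $C^s$ control in Moser's construction, which depends on $\|\rho\|_{C^{s-1}}$ and $\inf\rho>0$, both of which are in turn controlled by $\|X\|_{C^s}$. Since $M$ is compact and the whole construction varies continuously with the base point of the flowbox, these bounds can be made uniform in $x$, yielding a constant $C=C(\tau)$ as required.

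The main obstacle I anticipate is not the local construction itself, which is standard, but ensuring that the tubular neighborhood $V$, the transverse disk radius, and the resulting $C^s$-bound are all uniform in $x$. This requires choosing the radius of $D_x$ small enough (depending on $\tau$) that the map $\Phi_1$ remains injective over the full time interval $[0,\tau+\delta]$ even when the orbit passes close to itself, and then invoking compactness of $M$ together with continuity of the Moser correction in the density $\rho$ to pass from pointwise bounds to a uniform constant $C(\tau)$.
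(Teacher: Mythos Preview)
The paper does not actually prove this lemma; it simply refers to Lemma~3.4 of~\cite{B2}. Your two-step outline --- classical flowbox followed by a Moser-type correction on the transverse slice --- is exactly the standard argument behind that reference, and it is essentially correct.

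Two small points deserve tightening. First, the Moser step as you state it (a diffeomorphism $\phi\colon D_x\to D_x$ fixing the boundary) requires the two area forms to have equal total mass on $D_x$, which you have not arranged. The easiest fix is to drop the requirement that $\phi$ map $D_x$ onto itself and instead use the explicit local solution $\phi(y_1,y_2)=\bigl(y_1,\int_0^{y_2}\rho(y_1,s)\,ds\bigr)$ of $\det D\phi=\rho$, which is a $C^s$ diffeomorphism onto its image near the origin. Second, your regularity count for $\rho$ is too pessimistic: since $X$ is divergence-free, $\det DX^t\equiv 1$, and a short computation shows that the Jacobian of $\Phi_1$ at $(y,t)$ equals the $t$-independent quantity $\det\bigl[\partial_{y_1}\iota(y),\,\partial_{y_2}\iota(y),\,X(\iota(y))\bigr]$, where $\iota$ is the embedding of $D_x$ into $M$. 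This is $C^s$ in $y$, not merely $C^{s-1}$, so the composition $\Psi$ stays in $C^s$ without having to invoke the Dacorogna--Moser gain of one derivative. With these adjustments, your uniformity discussion via compactness and Gr\"onwall is the right way to obtain the constant $C(\tau)$.
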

\begin{proof}
 See~\cite{B2} Lemma 3.4.
\end{proof}

\begin{lemma}\label{AM}(Arbieto-Matheus Pasting Lemma) Given $\epsilon>0$ there exists $\delta>0$ such that if $X\in\mathfrak{X}^{\infty}_{\mu}(M)$, $K\subset M$ is a compact set and $Y\in\mathfrak{X}^{\infty}_{\mu}(M)$ is $\delta$-$C^{1}$-close to $X$ in a small neighborhood $U\supset K$, then there exist $Z\in\mathfrak{X}^{\infty}_{\mu}(M)$, $V$ and $W$ with $K\subset V\subset U\subset W$ such that:
\begin{description}
 \item 1) $Z|_{V}=Y$;
\item  2) $Z|_{int(W^c)}=X$;
\item  3) $Z$ is $\epsilon$-$C^1$-close to $X$.
\end{description}
\end{lemma}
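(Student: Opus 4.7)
My approach is to reduce the global pasting to local computations performed in flowbox coordinates, where $X$ has been straightened to the constant field $T=(1,0,0)$, and to handle the divergence-free constraint through a vector-potential formulation.

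First I would cover the compact set $K$ by finitely many flowbox-shaped open sets $V_1,\ldots,V_N$ contained in $U$, applying Lemma~\ref{cfbt} on each to obtain a conservative $C^\infty$ change of coordinates $\Psi_i$ sending $X|_{V_i}$ to $T$; the uniform bound $\|\Psi_i\|_{C^2}\leq C(\tau)$ will be critical for the $C^1$-estimates. By shrinking this cover I produce nested open sets $K\subset V\subset \bigcup V_i\subset U\subset W$ with $W$ only slightly larger than $U$. Pushing $Y$ forward by $\Psi_i$ yields a divergence-free field $T+\xi_i$ whose $C^1$-norm is bounded by a constant multiple of $\delta$.

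Second, on each flowbox (which is contractible) I use the fact that the divergence-free field $\xi_i$ admits a vector potential $W_i$ with $\xi_i=\operatorname{curl}(W_i)$. By selecting an appropriate gauge (for instance Biot--Savart with Neumann boundary conditions, combined with Schauder-type elliptic estimates) one arranges $\|W_i\|_{C^2}\leq C'\|\xi_i\|_{C^1}$. Choosing a smooth bump $\chi_i$ equal to $1$ on $\Psi_i(V\cap V_i)$ and $0$ near $\partial \Psi_i(V_i)$, I define the local pasted field $\tilde Y_i:=T+\operatorname{curl}(\chi_i W_i)$. This field is automatically divergence-free, equals $\Psi_{i\ast}Y$ where $\chi_i=1$ and $T=\Psi_{i\ast}X$ where $\chi_i=0$, and satisfies $\|\tilde Y_i-T\|_{C^1}\leq C''\|\xi_i\|_{C^1}$ by the product rule. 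Pulling back through the conservative $\Psi_i^{-1}$ yields a divergence-free $C^\infty$ extension agreeing with $Y$ near $K\cap V_i$ and with $X$ near $\partial V_i$; gluing these local pieces to $X$ on the remainder of $M$ produces the desired $Z$, and the neighborhoods $V$ and $W$ can be taken slightly smaller and larger than the corresponding collars of the $V_i$.

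The main obstacle is the loss of one derivative inherent in passing from a divergence-free field to its vector potential: securing $\|W_i\|_{C^2}\lesssim \|\xi_i\|_{C^1}$, rather than the naive $\|W_i\|_{C^2}\lesssim \|\xi_i\|_{C^2}$, is what makes the quantitative $C^1$-control of $Z-X$ possible, and really constitutes the analytic heart of the argument. Once this bound holds uniformly across the finite cover, choosing $\delta=\delta(\epsilon,C(\tau),C',C'',N)$ sufficiently small delivers $\|Z-X\|_{C^1}<\epsilon$ and completes the proof.
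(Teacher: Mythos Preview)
The paper does not prove this lemma at all; it simply refers to Theorem~3.1 of~\cite{AM}. Your sketch is therefore an attempt at the actual Arbieto--Matheus argument, and its core idea---cut off a potential for the divergence-free difference rather than the difference itself---is the right one.

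There is, however, a genuine gap in your local-to-global step. The flowboxes $V_1,\ldots,V_N$ covering a general compact $K$ will overlap, and on an overlap $V_i\cap V_j$ the two locally pasted fields need not agree: each is built from a \emph{different} potential $W_i$ and a \emph{different} cutoff $\chi_i$, so in the region where $0<\chi_i,\chi_j<1$ the resulting vector fields differ. For instance, a point $p\in V\cap V_j$ lying near $\partial V_i$ receives the value $X(p)$ from the $i$-th construction and $Y(p)$ from the $j$-th. Nothing in your outline reconciles this while preserving zero divergence. The actual argument in~\cite{AM} avoids the difficulty by using a \emph{single} global cutoff $\chi$ equal to $1$ on $K$ and $0$ outside $U$: one sets $Z_0=\chi Y+(1-\chi)X$, observes that $\operatorname{div}Z_0=\nabla\chi\cdot(Y-X)$, and then solves $\operatorname{div}v=-\nabla\chi\cdot(Y-X)$ once, with compactly supported $v$ and the appropriate estimate. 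The flowbox reduction you introduce is unnecessary and is precisely what creates the overlap obstruction.

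A secondary remark: the bound you flag as the analytic heart, $\|W_i\|_{C^2}\le C'\|\xi_i\|_{C^1}$, is exactly the kind of integer-order elliptic estimate that Schauder theory does \emph{not} deliver. Fortunately you do not need it. Writing $\operatorname{curl}(\chi_i W_i)=\chi_i\,\xi_i+\nabla\chi_i\times W_i$ shows that only $\|W_i\|_{C^1}$ enters the $C^1$-norm of $\tilde Y_i-T$, and the chain $\|W_i\|_{C^1}\le\|W_i\|_{C^{1,\alpha}}\le C\,\|\xi_i\|_{C^{0,\alpha}}\le C\,\|\xi_i\|_{C^1}$ on the bounded flowbox is available from first-order Schauder estimates for the div--curl system.
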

\begin{proof}
See~\cite{AM} Theorem 3.1. 
\end{proof}

\medskip

Given any divergence-free vector field $X$, any regular point $p\in M$ and $t\in\mathbb{R}$ it is clear that the linear Poincar\'e flow $P_{X}^{t}(p):N_{p}\rightarrow N_{X^{t}(p)}$ satisfy the equality:
\begin{equation}\label{volume}
|\det P_{X}^{t}(p)| \cdot \|X(X^{t}(p))\|=\|X(p)\|. 
\end{equation}
Thus the linear differential system $P_{X}^{t}$ based in $X^{t}$ is \emph{modified area-preserving},
 cf.~\cite{B1} Section 2.1.
Let $p\in M$ be a regular point and \,
$\Gamma_{X}^{\tau}(p)$ \,
be an injective orbit segment, where $\tau$ is less than the period of $p$
in case $p$ is periodic. 

Define $\gamma:[0,\tau]\to M$ to be the flow parameterization of $\Gamma_{X}^{\tau}(p)$,
$\gamma(t)=X^t(p)$.
We denote by $N_R$ the normal bundle to the trajectories
of the flow $X^t$, over the invariant set $R$ of all regular points.
Let $N_\gamma = \gamma^{*} N_R$ be the pull-back by $\gamma$,
which has fiber $N_{\gamma(t)}$ at each base point $t\in [0,\tau]$.
The linear Poincar\'{e} flow $P_X^t$ on the normal bundle $N_R$
induces the flow of linear maps
$P_\gamma^t(s) = P_X^t(\gamma(s))$,
on the restricted normal bundle  $N_\gamma$.
From~(\ref{volume}), we get the following conservativeness
relation for this restricted linear flow
\begin{equation}\label{vol2}
|\det P_{\gamma}^{t}(s)| \cdot \|\gamma'(s+t)\|=\|\gamma'(s)\|. 
\end{equation}

Now, the linear bundle $N_\gamma$ is trivial. 
We can define an explicit
linear bundle isomorphism 
$$\Psi:[0,\tau]\times \R^2\to N_\gamma,\quad\text{
such that}\quad
\Psi(t,x,y)=(t,\psi_t(x,y))\,, $$
where for each $t\in[0,\tau]$,
$\psi_t:\R^2\to N_\gamma(t)$ is a linear map
with determinant, i.e., volume dilatation coefficient, equal to $1/\nrm{\gamma'(t)}$.
Such family of linear maps $\psi_t$ is not unique.
To define one take any non vanishing section $\eta(t)$ of $N_\gamma$,
and normalize it so that $\nrm{\eta(t)}=1/\nrm{\gamma'(t)}$ for all $t\in [0,\tau]$.
Then $\psi_t:\R^2\to N_\gamma(t)$ may be defined by
$$ \psi_t(1,0)=\eta(t)\quad\text{ and }\quad \psi_t(0,1)=\eta(t)\times \gamma'(t)\;.$$
Using the isomorphism $\Psi$ we can conjugate the restricted linear flow
$P_\gamma^t$ on $N_\gamma$ to a linear flow $L_\gamma^t$ on the constant bundle
$[0,\tau]\times\R^2$.
Defining the linear maps,
$L^t(s,u,v) = \left( \,s+t,\, M_t(u,v) \, \right)$, with
$$M_t(u,v)=(\psi_{s+t})^{-1}\circ P_\gamma^t (s) \circ\psi_s(u,v)
\;,$$
it is clear that they form a linear flow, in the sense that
whenever $s$, $s+r$ and $s+r+t$ are in $[0,\tau]$,\,
$L_\gamma^{t+r}(s) = L_\gamma^{t}(s+r) \circ L_\gamma^{r}(s)$.
From~(\ref{vol2}), it follows that all linear maps $M_t$ have
determinant one. Therefore, 

\begin{lemma}\label{flowL}
The linear maps $L^{t}$ form a volume-preserving flow. 
\end{lemma}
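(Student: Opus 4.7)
The proof is essentially a calculation based on the definitions, so the plan is to verify the two claims separately: (i) the maps $L^t$ form a flow, and (ii) each $L^t$ preserves volume on $[0,\tau]\times\R^2$.

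For (i), I would note that the flow property for $P_\gamma^t$ (i.e.\ the cocycle relation $P_\gamma^{t+r}(s) = P_\gamma^{t}(s+r)\circ P_\gamma^{r}(s)$) is inherited from the linear Poincar\'{e} flow $P_X^t$, which is a genuine flow because it is obtained by integrating the linear variational equation $\dot u(t) = \Pi_{X^t(x)}\circ DX_{X^t(x)}\cdot u(t)$. Then, because $L^t$ is defined by conjugating $P_\gamma^t$ through the family of bundle charts $\psi_t$, the intermediate charts $\psi_{s+r}^{-1}\circ\psi_{s+r}$ cancel and the cocycle relation carries over to $M_t$, hence to $L^t$. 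The first component is just translation by $t$ on the interval $[0,\tau]$, which is trivially a flow.

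For (ii), which is the substantive part, the preserved volume form on $[0,\tau]\times\R^2$ is the product $dt\wedge du\wedge dv$. Since $L^t$ acts as translation on the $t$-coordinate and linearly on the $\R^2$-fiber, its Jacobian on the product equals $\det M_t$. By the multiplicativity of the determinant,
\begin{equation*}
\det M_t \;=\; \det(\psi_{s+t})^{-1}\cdot\det P_\gamma^t(s) \cdot \det \psi_s \;.
\end{equation*}
By construction, $\det\psi_r = 1/\nrm{\gamma'(r)}$ for each $r\in[0,\tau]$, so substituting and taking absolute values,
\begin{equation*}
\left|\det M_t\right| \;=\; \nrm{\gamma'(s+t)}\cdot\left|\det P_\gamma^t(s)\right|\cdot\frac{1}{\nrm{\gamma'(s)}} \;=\; 1,
\end{equation*}
where the last equality is precisely the conservativeness relation~(\ref{vol2}).

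The only residual point is to fix the sign, i.e.\ to rule out $\det M_t=-1$. For this I would use continuity: the map $(s,t)\mapsto \det M_t$ (where $M_t$ depends on $s$ through the conjugation) is continuous in $t$ with values in $\{+1,-1\}$, and at $t=0$ one has $M_0 = \psi_s^{-1}\circ \mathrm{Id}\circ\psi_s = \mathrm{Id}$, so $\det M_0 = +1$. Connectedness of $[0,\tau]$ then forces $\det M_t = +1$ throughout. The calculation is routine; the main (minor) obstacle is keeping track of the conventions for $\psi_t$ and making sure the orientation chosen via $\psi_t(0,1)=\eta(t)\times\gamma'(t)$ is consistent with the positive sign of $\det P_\gamma^t(s)$ forced by the flow being isotopic to the identity.
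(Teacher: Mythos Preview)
Your proposal is correct and follows essentially the same approach as the paper: the flow property is inherited by conjugation from the cocycle relation for $P_\gamma^t$, and the volume-preserving claim reduces to $\det M_t = 1$ via the determinant factorization together with $\det\psi_r = 1/\nrm{\gamma'(r)}$ and relation~(\ref{vol2}). In fact you are slightly more careful than the paper, which simply asserts that~(\ref{vol2}) yields $\det M_t = 1$ without discussing the sign; your continuity argument from $M_0 = \mathrm{Id}$ fills that small gap.
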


\medskip

In the next lemma we consider a change of coordinates 
useful to perform volume-preserving perturbations of the linear Poincar\'{e} flow along a periodic orbit. 
We shall denote by $L$ the 
zero divergence vector field associated to the linear flow $L^{t}$.

\medskip

\begin{lemma}\label{trivial}
Given a vector field $X\in\mathfrak{X}_{\mu}^{s}(M)$, $s\geq{2}$, and $\tau>0$
there is some $C=C(\tau)>0$ such that for any
arc of regular orbit 
$\Gamma_{X}^{\tau}(x)$
of length $\tau$,
there exists a conservative $C^{s}$
diffeomorphism $\Psi$,
defined over a neighborhood of $\Gamma_{X}^{\tau}(x)$,
such that $L=\Psi_{*}X$ and $\|\Psi\|_{C^s}\leq C$. 
\end{lemma}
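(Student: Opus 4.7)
The plan is to construct $\Psi$ as a composition $\Psi = \Phi_1 \circ \Phi_0$: first straighten $X$ to the constant vector field $T = (1,0,0)$ via the conservative flowbox theorem, then apply an explicit linear-in-normal-direction change of coordinates that deforms $T$ exactly into $L$.

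First I invoke Lemma \ref{cfbt} to obtain a $C^s$ conservative diffeomorphism $\Phi_0$, defined on some neighborhood of $\Gamma_X^\tau(x)$, with $(\Phi_0)_* X = T$ and $\|\Phi_0\|_{C^s} \leq C_0(\tau)$. Since $L^t(s, \xi) = (s + t, M_t(s)\xi)$ with $M_t(s) \in SL(2, \R)$, the vector field $L$ has the form $L(s, \xi) = \partial_s + B(s)\xi \cdot \partial_\xi$, where $B(s) := \partial_t M_t(s)|_{t=0} \in sl(2, \R)$. I then define the $SL(2,\R)$-valued curve
$$A(s) := M_s(0) = \psi_s^{-1} \circ P_X^s(x) \circ \psi_0,$$
which by the flow property of $M$ satisfies the linear matrix ODE $A'(s) = B(s) A(s)$ with $A(0) = I$. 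The regularity of $P_X^s(\cdot)$ and $\psi_s$ in $s$ ensures $A \in C^s([0, \tau]; SL(2, \R))$, with $C^s$-norm controlled solely in terms of $\tau$ and $\|X\|_{C^s}$.

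Next I define $\Phi_1(s, \xi) := (s, A(s)\xi)$. This map is conservative because $\det A(s) \equiv 1$, is $C^s$ with norm bound $C_1(\tau)$, and a direct computation shows
$$(\Phi_1)_* T\,\bigr|_{(s, \eta)} = (1,\, A'(s) A(s)^{-1} \eta) = (1,\, B(s)\, \eta) = L(s, \eta),$$
so $(\Phi_1)_* T = L$ as vector fields everywhere on the image of $\Phi_1$. Setting $\Psi := \Phi_1 \circ \Phi_0$ then yields the required conservative $C^s$ diffeomorphism, with $\Psi_* X = (\Phi_1)_* T = L$ and $\|\Psi\|_{C^s} \leq C_0(\tau)\, C_1(\tau) =: C(\tau)$.

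The main subtlety is establishing the pushforward identity $(\Phi_1)_* T = L$ pointwise on a full neighborhood of the orbit axis, rather than merely along it. This succeeds only because $\Phi_1$ is linear in the normal variable $\xi$ and $L$ is affine in $\xi$, so the ODE $A' = BA$ is precisely the compatibility condition forcing equality at every point. A secondary technical matter is the uniformity of $C(\tau)$ over the choice of orbit $\Gamma_X^\tau(x)$, which follows from the uniform $C^s$-bound on $X$ over the compact manifold $M$ together with the boundedness of the interval $[0, \tau]$, so Gr\"onwall-type estimates for the linear ODE deliver uniform control on $A$.
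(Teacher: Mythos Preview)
Your argument is correct, but it differs from the paper's proof in the second step. The paper simply applies the conservative flowbox theorem (Lemma~\ref{cfbt}) a second time, now to the linear vector field $L$, obtaining a conservative $C^{s}$ chart $\Psi_{2}$ with $(\Psi_{2})_{*}L = T$, and then sets $\Psi = \Psi_{2}^{-1}\circ\Psi_{1}$. You instead construct the map from $T$ to $L$ by hand: taking $\Phi_{1}(s,\xi)=(s,A(s)\xi)$ with $A$ the fundamental solution of $A' = BA$, you verify $(\Phi_{1})_{*}T = L$ via the pushforward computation. The paper's route is shorter and avoids any new computation, at the cost of invoking Lemma~\ref{cfbt} on the auxiliary vector field $L$ (which, strictly, lives on $[0,\tau]\times\R^{2}$ rather than on $M$, so one is tacitly using the general local form of the conservative flowbox theorem). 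Your explicit construction bypasses that second invocation, makes the uniform $C^{s}$ bound on $\Phi_{1}$ transparent via Gr\"onwall, and clarifies why the conjugacy holds on a full neighborhood of the axis and not merely along it. Both arguments yield the same $\Psi$ up to the freedom in choosing a flowbox chart for $L$.
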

\begin{proof}
By Lemma~\ref{cfbt} there exists a conservative $C^{1}$ diffeomorphism $\Psi_{1}$ such that $T=(\Psi_{1})_{*}X$, where $T$ is a trivial vector field $T=(1,0,0)$. Now applying the same result we find a conservative $C^{s}$ diffeomorphism $\Psi_{2}$ such that $T=(\Psi_{1})_{*}L$. Finally, we define  $\Psi:=\Psi_{2}^{-1}\circ\Psi_{1}$. Hence $\det D\Psi=\det D\Psi_{2}^{-1}.\det D\Psi_{1}=1$.
\end{proof}

\medskip

In this lemma, the constant $C(\tau)$ grows exponentially with $\tau$,
but we shall always use it with $\tau=1$. In order to make
a lengthy perturbation along some large periodic orbit, instead
of doing a single (and costly) perturbation, we split the period
into many time-one  disjoint intervals and then perform several
disjointly supported (local) perturbations. The global perturbation size
is small, because it is just the maximum size of all local perturbations.
All perturbations will be carried out in the linearizing co-ordinates
provided by Lemma~\ref{trivial},
after which lemma~\ref{AM} is used to extend conservatively 
the linear vector field into a zero divergence vector field that
coincides with the original vector field outside a small neighbourhood
of the periodic orbit. 

\medskip

Let $R_\xi$ denote the $\xi$-angle  rotation in any Euclidean $2$-plane.

\begin{lemma}\label{rot}
Given a hyperbolic matrix $A\in\mbox{SL}(2,\R)$, let
$\theta = \angle(E^s,E^u)$ be the angle between matrix $A$ eigen-directions.
Assume the rotation $R_\theta$ of angle $\theta$ takes the unstable direction
onto the stable direction of $A$, i.e., $R_\theta \, E^u = E^s$.
Then the matrix $A\, R_\theta$ is elliptic.
\end{lemma}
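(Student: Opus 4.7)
The plan is to reduce the claim to the trace criterion for ellipticity in $SL(2,\R)$: a matrix $B\in SL(2,\R)$ is elliptic if and only if $|\operatorname{tr} B|<2$, since in that case its characteristic polynomial $\lambda^{2} - (\operatorname{tr} B)\,\lambda + 1$ has two complex conjugate roots lying on the unit circle. Both $A$ and $R_{\theta}$ lie in $SL(2,\R)$, so $AR_{\theta}\in SL(2,\R)$, and the whole proof amounts to computing $\operatorname{tr}(AR_{\theta})$ and bounding it.

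The key observation I would exploit is the hypothesis $R_{\theta}\,E^{u}=E^{s}$, combined with the fact that $E^{s}$ is the $\sigma^{-1}$-eigenspace of $A$, where $\sigma^{\pm 1}$ (with $|\sigma|>1$) are the eigenvalues of $A$. Pick an orthonormal basis $\{e_{1},e_{2}\}$ of $\R^{2}$ with $e_{1}$ a unit vector along $E^{u}$; then the hypothesis says $R_{\theta}e_{1} = (\cos\theta,\sin\theta)^{T}$ is a unit vector along $E^{s}$, hence
\[
AR_{\theta}\,e_{1} \;=\; A(R_{\theta}e_{1}) \;=\; \sigma^{-1}\,R_{\theta}e_{1}\,,
\]
so the first column of $AR_{\theta}$ in this basis is $\sigma^{-1}(\cos\theta,\sin\theta)^{T}$, and in particular $(AR_{\theta})_{11}=\sigma^{-1}\cos\theta$.

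To obtain the $(2,2)$-entry I would decompose $e_{2}$ as a linear combination of the two (non-orthogonal) unit eigenvectors of $A$, apply $A$ (which is diagonal in that basis), and read off the second coordinate of $AR_{\theta}\,e_{2}=A\,(-\sin\theta,\cos\theta)^{T}$. A short calculation gives $(AR_{\theta})_{22}=\sigma^{-1}\cos\theta$ as well, so
\[
\operatorname{tr}(AR_{\theta}) \;=\; 2\,\sigma^{-1}\cos\theta\,.
\]
Since $|\sigma|>1$ and $|\cos\theta|\le 1$, we conclude $|\operatorname{tr}(AR_{\theta})|\le 2|\sigma|^{-1}<2$, proving that $AR_{\theta}$ is elliptic. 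There is no serious obstacle: this is a direct $2\times 2$ computation, and the only noteworthy point is that once one recognises the stable direction of $A$ as $R_{\theta}$ applied to the unstable one, the first column of $AR_{\theta}$ is transparent and already delivers half of the trace.
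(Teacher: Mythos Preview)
Your argument is correct and considerably shorter than the paper's own proof. Writing $A$ in an orthonormal basis with $e_{1}\in E^{u}$ forces the second row of $A$ to be $(0,\sigma^{-1})$, so the claimed identity $(AR_{\theta})_{22}=\sigma^{-1}\cos\theta$ is immediate, and together with $(AR_{\theta})_{11}=\sigma^{-1}\cos\theta$ one gets $\operatorname{tr}(AR_{\theta})=2\sigma^{-1}\cos\theta$, hence $|\operatorname{tr}(AR_{\theta})|<2$.

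The paper takes a different, more dynamical route: it studies the induced action of $A$ and $B=AR_{\theta}$ on the projective line $\PP^{1}$, lifts to diffeomorphisms of $\R$, and shows that the lift of $f_{B}$ satisfies $\tilde f_{B}(x)-x\neq 0$ for all $x$, so $f_{B}$ has nonzero rotation number and $B$ must be elliptic. Your trace computation is more elementary and yields the sharper quantitative statement $|\operatorname{tr}(AR_{\theta})|=2|\sigma|^{-1}|\cos\theta|$; the paper's rotation-number argument, while longer, is coordinate-free and explains \emph{why} the composition is elliptic in terms of the circle dynamics (the rotation by $\theta$ pushes every point past the attracting fixed point of $f_{A}$). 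Either approach is perfectly adequate here.
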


\begin{proof}
Take a hyperbolic matrix $A\in\mbox{SL}(2,\R)$. Let
$\theta = \angle(E^s,E^u)$ be the angle between the eigen-directions
of matrix $A$,
and set $B = A\,R_\theta$.
Consider the action of matrices $A$ and $B$ on the projective line $\PP^1 = \R/\pi\,\Z$,
described by the diffeomorphisms
$f_A:\PP^1\to\PP^1$ and $f_B:\PP^1\to\PP^1$.
Lift these maps to diffeomorphisms
$\tilde{f}_A:\R\to\R$ and $\tilde{f}_B:\R\to\R$
such that  
$\tilde{f}_A(x+\pi)= \tilde{f}_A(x)+\pi$
and 
$\tilde{f}_B(x+\pi)= \tilde{f}_B(x)+\pi$,
for all $x\in \R$. Notice that both $\tilde{f}_A$ and $\tilde{f}_B$ 
are increasing functions, since $A$ and $B$ have determinant one.
The definition of $\theta$ shows that the lifting $\tilde{f}_B$  can be chosen to
satisfy the relation
$\tilde{f}_B(x) = \tilde{f}_A(x+\theta)$, for all $x\in\R$.
Since $A$ is hyperbolic, $f_A$ has two fixed points:
an expanding fixed point $x^u$, and a contractive fixed point
$x^s$. We can choose the lifting $\tilde{f}_A$ so that
it has two families of fixed points, 
$x^u+k\,\pi$ and $x^s+k\,\pi$, with $k\in\Z$, and we may 
assume the fixed points $x^s,\,x^u\in\R$ satisfy
$\Mod{x^s-x^u}=\theta$.
In order to prove that $B$ is elliptic it is enough to
show that $f_B$ has non zero rotation number,
which amounts to say that $\tilde{f}_B(x)-x$
keeps a constant sign as $x$ runs through $\R$.
Two cases may occur: $x^s< x^u$ and $x^u< x^s$.
Assume first that $x^s< x^u$.
Then $-\theta <\tilde{f}_A(x)-x<0$ for all $x\in ] x^s, x^u[ $,
and $\tilde{f}_A(x)-x>0$ for all $x\in ] x^u, x^s+\pi[ $.
This implies that
$\tilde{f}_A(x)-x> -\theta$, for all $x\in\R$.
Therefore,
$\tilde{f}_B(x)-x =\tilde{f}_A(x+\theta)-(x+\theta) +\theta>
-\theta +\theta = 0$, for every $x\in\R$,
proving that $B$ is elliptic.
Assume now that $x^u< x^s$.
In this case $0 <\tilde{f}_A(x)-x< \theta $ for all $x\in ] x^u, x^s[ $,
and $\tilde{f}_A(x)-x<0$ for every $x\in ] x^s, x^u+\pi[ $.
But this implies that
$\tilde{f}_A(x)-x< \theta$, for all $x\in\R$.
Thus,
$\tilde{f}_B(x)-x =\tilde{f}_A(x+\theta)-(x+\theta) +\theta<
-\theta +\theta = 0$, for every $x\in\R$,
which proves that $B$ is elliptic.
\end{proof}

\medskip


\begin{lemma}\label{abstr:perturb}
Given $C>0$ and $\epsilon>0$ there is some $\delta>0$ such that
for any smooth function $A:\R\to sl(2,\R)$ with $\|A(t)\|\leq C$ for $t\in\R$
and a matrix $S\in SL(2,\R)$ with $\|S-Id\|\leq \delta$
 there exists a smooth function $B:\R\to sl(2,\R)$
such that:
\begin{description}
\item 1) $\|A(t)-B(t)\|\leq \epsilon$\; for all $t\in\R$,
\item 2) $A(t)=B(t)$ \; for all $t\notin ]0,1[$, and
\item 3) $\Phi_B(1)= \Phi_A(1)\cdot S$. 
\end{description}
\end{lemma}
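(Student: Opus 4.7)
The plan is to realise the perturbation via a conjugation trick: look for $B$ of the form
$$ B(t) = A(t) + \Phi_A(t)\,\dot{U}(t)\,U(t)^{-1}\,\Phi_A(t)^{-1}, $$
where $U:\R\to SL(2,\R)$ is a smooth path to be chosen with $U(t)=\mathrm{Id}$ for $t\leq 0$, $U(t)=S$ for $t\geq 1$, and both pieces are locally constant at the respective endpoints. A direct differentiation shows that the fundamental solution of this $B$ is $\Phi_B(t)=\Phi_A(t)\,U(t)$; hence $\Phi_B(1)=\Phi_A(1)\cdot S$, which is property $3)$. Since $U$ is locally constant outside $]0,1[$, we have $\dot{U}=0$ there and therefore $B=A$ on $\R\setminus]0,1[$, giving $2)$. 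That $B(t)\in sl(2,\R)$ follows because $\dot{U}U^{-1}$ is traceless (since $U\in SL(2,\R)$), and conjugation by $\Phi_A(t)$ preserves trace.

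The second step is to construct $U$ so that $\dot{U}U^{-1}$ is as small as we wish when $S$ is close to $\mathrm{Id}$. Fix, once and for all, a smooth function $\varphi:\R\to[0,1]$ with $\varphi\equiv 0$ on $(-\infty,0]$, $\varphi\equiv 1$ on $[1,+\infty)$, and set $K:=\|\dot\varphi\|_\infty$. For $\|S-\mathrm{Id}\|$ small enough (say $\leq\delta<1/2$), the principal logarithm $\log S\in sl(2,\R)$ is defined (traceless as $\det S=1$) and satisfies $\|\log S\|\leq 2\|S-\mathrm{Id}\|$. Define
$$ U(t) := \exp\bigl(\varphi(t)\,\log S\bigr). $$
Because $\log S$ commutes with $U(t)$, one obtains $\dot{U}(t)\,U(t)^{-1}=\dot\varphi(t)\,\log S$, so $\|\dot{U}(t)U(t)^{-1}\|\leq 2K\delta$ uniformly in $t$.

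Finally, to obtain the $C^0$-estimate of property $1)$, note that $\|A(t)\|\leq C$ together with Gronwall's inequality gives $\|\Phi_A(t)\|,\|\Phi_A(t)^{-1}\|\leq e^C$ for $t\in[0,1]$. Consequently,
$$ \|B(t)-A(t)\| \leq e^{2C}\,\|\dot{U}(t)U(t)^{-1}\| \leq 2K\,e^{2C}\,\delta. $$
Choosing $\delta=\delta(C,\epsilon)>0$ so that $2K\,e^{2C}\,\delta\leq\epsilon$ finishes the proof.

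There is no real obstacle: the argument is a one-shot constructive computation. The only point requiring care is that $\varphi$ must be locally constant at both endpoints of $[0,1]$, so that the pasted $B$ is globally smooth and agrees with $A$ exactly on $\R\setminus]0,1[$; and one must observe that $S\mapsto \log S$ is a well-defined smooth map on a neighbourhood of $\mathrm{Id}$ in $SL(2,\R)$, taking values in $sl(2,\R)$, which is precisely where the smallness assumption $\|S-\mathrm{Id}\|\leq\delta$ is used.
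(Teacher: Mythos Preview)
Your proof is correct and essentially identical to the paper's: the paper writes $S=e^Q$ with $Q\in sl(2,\R)$, picks a bump function $\alpha$ with $\alpha\equiv 0$ on $(-\infty,0]$ and $\alpha\equiv 1$ on $[1,\infty)$, and sets $\Phi(t)=\Phi_A(t)\,e^{\alpha(t)Q}$, which is precisely your $\Phi_A(t)\,U(t)$ with $U(t)=\exp(\varphi(t)\log S)$. The resulting perturbation $H(t)=\alpha'(t)\,\Phi_A(t)\,Q\,\Phi_A(t)^{-1}$ and the estimate $\|H(t)\|\leq 2e^{2C}\|Q\|$ match your computation exactly.
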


\begin{proof}
Taking $\delta>0$ small enough we may assume that
$S=e^Q$ for some traceless matrix $Q\in sl(2,\R)$.
Consider now a bump-function $\alpha:\mathbb{R}\rightarrow [0,1]$ such that $\alpha(t)=0$ for all $t\leq 0$, $\alpha(t)=1$ for all $t\geq 1$ and $\alpha^{\prime}(t)<2$ for all $t\in\mathbb{R}$. We define $\Phi:\R\to SL(2,\R)$,
$\Phi(t)= \Phi_{A}(t)\cdot e^{\alpha(t)\,Q}$.
By taking derivatives with respect to $t$ we obtain:
\begin{eqnarray*}
\Phi'(t) &=& (\Phi_A(t)\cdot e^{\alpha(t)\, Q})^{\prime} =\\
&=& \Phi_A^{\prime}(t)\cdot e^{\alpha(t)\, Q}+
\alpha'(t)\,\Phi_A(t)\cdot Q \cdot e^{\alpha(t)\, Q} =\\
&=& A(t)\cdot \Phi_A(t)\cdot e^{\alpha(t)\, Q}\; +\\
& & \;+\;  \alpha'(t)\,\Phi_A(t)
\cdot Q
\cdot \Phi_A(t)^{-1}
\cdot \Phi_A(t)
\cdot e^{\alpha(t)\, Q} =\\
&=& [A(t)+H(t)]\cdot \Phi_A(t)\cdot e^{\alpha(t)\, Q}\\
&=& [A(t)+H(t)]\cdot \Phi(t),
\end{eqnarray*}
where $H(t)=\alpha'(t)\,\Phi_A(t)
\cdot Q
\cdot \Phi_A(t)^{-1}$. 
Note that $Tr(H(t))=Tr(Q)=0$. Therefore, 
we have a map
$B:\R\to sl(2,\R)$ defined by
$B(t)= A(t)+H(t)$,
such that $\Phi(t)=\Phi_B(t)$  for all $t\in\R$.
In particular, for $t=1$ we have
$\Phi_B(1)= \Phi_{A}(1)\cdot e^{\alpha(1)\,Q}= \Phi_{A}(1)\cdot S$.
Because $\alpha'(t)$ is supported in $[0,1]$
we get property 2). Finally, take $\delta>0$ small enough so that
for $\| S-Id\|\leq \delta$ the unique matrix $Q\in sl(2,\R)$ such that
$S=e^Q$ satisfies $\|Q\|\leq e^{-2\, C}\,\epsilon/2$.
Then
\begin{eqnarray*}
\|B(t)-A(t)\| &=& \| H(t)\| \\
&\leq & |\alpha'(t)|\,\|\Phi_A(t)\|\,\| Q\|\, \| \Phi_A(t)^{-1}\| \\
&\leq & 2\,e^{2\,C}\,\| Q\|\leq \epsilon\;. 
\end{eqnarray*}
\end{proof}

\begin{remark}\label{smallangle1}
In order to obtain a similar conclusion like in lemma~\ref{abstr:perturb} but this time such that in 3) we have $\Phi_B(1)= S\cdot \Phi_A(1)$  we just switch $\Phi_{A}(t)\cdot e^{\alpha(t)\,Q}$ by $e^{\alpha(t)\,Q}\cdot\Phi_{A}(t)$ and proceed analogously.
\end{remark}

\medskip


\begin{lemma}\label{smallang}
Given $\epsilon>0$ and $X\in\mathfrak{X}_{\mu}^{\infty}(M)$ there exists $\delta>0$
 such that given $\xi \in ]-\delta,\delta[$ and given
a periodic point $p\in M$ with period $\tau>1$,
there is some perturbation $Y\in\mathfrak{X}_{\mu}^{1}(M)$ of $X$ such that:
\begin{description}
 \item i) $\|X-Y\|_{C^1}<\epsilon$;
 \item ii) $X=Y$ outside $\mathcal{F}^{\tau}_{X}(p)(B(p,r))$;
 \item iii) $P_{Y}^{\tau}(p)=P_{X}^{\tau}(p)\circ R_{\xi}$.
\end{description}
\end{lemma}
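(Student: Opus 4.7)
The plan is to insert the small rotation $R_\xi$ during a time-one segment of the orbit of $p$, using Lemma~\ref{trivial} to trivialize the flow, Lemma~\ref{abstr:perturb} to realize the rotation as a small perturbation of the linear variational equation, and the Arbieto--Matheus Pasting Lemma~\ref{AM} to return to a genuine divergence-free vector field on $M$.

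More concretely, first I would fix the arc $\Gamma_{X}^{1}(p)=\{X^t(p):0\leq t\leq 1\}$, which is regular and injective because $\tau>1$. Lemma~\ref{trivial} produces a conservative $C^s$ diffeomorphism $\Psi$ on a flowbox neighborhood $V$ of $\Gamma_{X}^{1}(p)$ conjugating $X|_V$ to the trivial volume-preserving linear flow $L$; along this flow the linear Poincar\'e flow of $X$ is conjugated (via $\Psi$ together with the trivialization of $N_\gamma$ from Lemma~\ref{flowL}) to a fundamental solution $\Phi_A(t)$ of some linear variational equation $\dot{u}(t)=A(t)u(t)$ with $A:\R\to sl(2,\R)$ of class $C^\infty$. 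Because $\|\Psi\|_{C^s}\leq C(1)$, the $sl(2,\R)$-norm $\|A(t)\|$ is bounded by a constant $C_0$ depending only on $X$.

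Next I would apply Lemma~\ref{abstr:perturb} with the bound $C_0$, the tolerance $\epsilon_0>0$ still to be chosen, and the matrix $S=R_\xi$: this yields a $\delta_1>0$ such that whenever $\|R_\xi-\mathrm{Id}\|\leq\delta_1$ (which is automatic for $|\xi|$ small) there is a smooth $B:\R\to sl(2,\R)$ coinciding with $A$ outside $]0,1[$, satisfying $\|A(t)-B(t)\|\leq\epsilon_0$ and $\Phi_B(1)=\Phi_A(1)\cdot R_\xi$. Translating $B$ back through the trivializations of $N_\gamma$ and $\Psi$ gives a new vector field $\widetilde{Y}$ defined on a smaller flowbox $V'\subset V$, divergence-free there, agreeing with $X$ near the boundary in the $t$-direction, and $C^1$-close to $X$ with a closeness controlled by $\epsilon_0$.

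Then I would use Lemma~\ref{AM}: with the compact set $K$ taken as a small time-one flowbox around $\Gamma_X^1(p)$ inside $V'$, and after choosing $\epsilon_0$ smaller than the $\delta$ given by the Pasting Lemma applied to $\epsilon$, I obtain a globally defined $Y\in\mathfrak{X}_\mu^\infty(M)$ which equals $\widetilde{Y}$ on $K$, equals $X$ outside a slightly larger neighborhood $W$ chosen to lie inside the flowbox $\mathcal{F}_X^\tau(p)(B(p,r))$, and is $\epsilon$-$C^1$-close to $X$. Properties i) and ii) are then immediate, and iii) follows from the semigroup identity
\[
P_Y^\tau(p)=P_Y^{\tau-1}(X^1(p))\circ P_Y^1(p)=P_X^{\tau-1}(X^1(p))\circ P_X^1(p)\circ R_\xi=P_X^\tau(p)\circ R_\xi,
\]
since $Y=X$ outside the time-one perturbation window, and inside that window the conjugation produced by Lemmas~\ref{trivial} and~\ref{flowL} identifies the modification of the linear Poincar\'e flow with the multiplication by $R_\xi$ forced by Lemma~\ref{abstr:perturb}.

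The main obstacle is bookkeeping the constants and the geometry: one must choose $\delta$ and $\epsilon_0$ so that, simultaneously, (a)~$R_\xi$ lies in the identity-neighborhood required by Lemma~\ref{abstr:perturb} with bound $C_0$ and tolerance $\epsilon_0$, and (b)~$\epsilon_0$ is small enough to feed into Lemma~\ref{AM} and deliver a final $C^1$-size at most $\epsilon$. The bound $C_0$ only depends on $X$ (through $C(1)$ in Lemma~\ref{trivial}), so $\delta$ can indeed be chosen independently of the period $\tau$ and of which periodic point $p$ we started with, as the statement requires.
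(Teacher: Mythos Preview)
Your proposal is correct and follows essentially the same route as the paper's proof: linearize along a time-one arc via Lemma~\ref{trivial}, invoke Lemma~\ref{abstr:perturb} with $S=R_\xi$ to modify the variational equation on $]0,1[$, and then glue back using the Pasting Lemma~\ref{AM}. The paper's argument is terser (it does not spell out the semigroup identity for iii) nor the two-stage choice of $\epsilon_0$ and $\delta$), but the ingredients and their order are identical.
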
 

\begin{proof}
We first use Lemma~\ref{trivial} and assume that our flow is the linear one 
given by Lemma \ref{flowL}. 
Consider the linear variational equation associated to the linear Poincar\'e flow,
and let $A:\R\to sl(2,\R)$ be the equation coefficient matrix. Then
$\| A(t)\| \leq C$ for some constant $C>0$, independent of the 
chosen periodic orbit. Take $\delta>0$ given by the previous lemma,
and assume that $\xi\in ]-\delta,\delta[$.
Consider the rotation
$$R_{\xi}:=\begin{pmatrix}
\cos(\xi) & -\sin(\xi) \\
\sin(\xi) & \cos(\xi)   
  \end{pmatrix}.
$$ 
which satisfies $\|R_\xi-Id\|\leq |\xi|\leq\delta$.
By lemma~\ref{abstr:perturb},
there is a smooth function $B:\R\to sl(2,\R)$
satisfying 1), 2) and 
3): $\Phi_B(1)= \Phi_A(1)\cdot R_\xi$.
By Lemma~\ref{AM} we can extend conservatively the linear vector field induced by $B$ 
obtaining the required divergence-free vector field $Y$ in the conditions of the lemma. 
\end{proof}

\begin{remark}\label{smallangle}
By Remark~\ref{smallangle1} the same conclusion of Lemma \ref{smallang} holds with $P_{Y}^{\tau}(p)=R_{\xi}\circ P_{X}^{\tau}(p)$ in the place of iii).
\end{remark}

\medskip

\begin{proposition}\label{propA}(Small angle perturbation)
Given $\epsilon>0$ and $X\in\mathfrak{X}_{\mu}^{1}(M)$, there is some\, $\theta>0$ \,
 such that given any  periodic hyperbolic point $p\in M$ with period $\tau>1$, and such that $\measuredangle(N_{q}^{u},N_{q}^{s})<\theta$ for some $q=X^{t}(p)$, $0\leq t\leq \tau$, 
then there is some perturbation $Y\in\mathfrak{X}_{\mu}^{1}(M)$ of $X$ such that\,
 $\|X-Y\|_{C^1}<\epsilon$\, and 
$p$ is an elliptical periodic point for $Y^{t}$ with period $\tau$.
\end{proposition}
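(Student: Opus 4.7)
The plan is to combine Lemma~\ref{smallang} with Lemma~\ref{rot}. The key observation is that a hyperbolic element $A\in SL(2,\R)$ whose eigen-directions form a small angle $\xi$ becomes elliptic after right-multiplication by the rotation $R_\xi$ that sends its unstable eigen-direction onto its stable one; since $R_\xi$ is close to the identity, the perturbation of $X$ realising this composition at the level of the linear Poincar\'e flow is also small in the $C^1$-topology.

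First, given $\epsilon>0$ and $X\in\mathfrak{X}_{\mu}^{1}(M)$, I would invoke Lemma~\ref{smallang} to obtain $\delta>0$ controlling the $C^1$-size of perturbations that realise arbitrary rotations $R_\xi$ with $|\xi|<\delta$ along a periodic orbit of period $\tau>1$, and I would set $\theta:=\delta$.

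Next, given a hyperbolic periodic point $p$ of period $\tau>1$ and a point $q=X^{t_0}(p)$ on its orbit with $\measuredangle(N^{u}_{q},N^{s}_{q})<\theta$, consider the Poincar\'e return map $A:=P^{\tau}_{X}(q)\in SL(2,\R)$. This is a hyperbolic matrix whose unstable and stable eigen-directions are precisely $N^{u}_{q}$ and $N^{s}_{q}$. Let $\xi$ be the signed angle, with $|\xi|<\theta=\delta$, for which $R_\xi$ sends $N^{u}_{q}$ onto $N^{s}_{q}$. Applying Lemma~\ref{smallang} at $q$ produces $Y\in\mathfrak{X}_{\mu}^{1}(M)$ with $\|X-Y\|_{C^1}<\epsilon$, coinciding with $X$ outside a thin flowbox along the orbit, and satisfying $P^{\tau}_{Y}(q)=A\circ R_\xi$. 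Because the perturbation is supported in a flowbox of the orbit (not touching the orbit itself outside $q$), the point $p$ remains periodic of period $\tau$ for $Y^t$. Lemma~\ref{rot} then applies directly to the pair $(A,R_\xi)$, so $P^{\tau}_{Y}(q)=A\circ R_\xi$ is elliptic; since the Poincar\'e return map at $p$ is linearly conjugate to that at $q$, it too is elliptic, making $p$ an elliptical periodic point of $Y^t$ of period $\tau$.

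The main subtlety, essentially bookkeeping, is to match the geometric data (sign of $\xi$, and whether $R_\xi$ composes on the left or on the right of $A$) with the hypothesis of Lemma~\ref{rot}. Remark~\ref{smallangle} gives the flexibility to insert $R_\xi$ on either side, so the required configuration can always be arranged. One also needs to ensure that the time-one support of the perturbation does not revisit itself along the orbit, which is guaranteed by the assumption $\tau>1$ together with choosing the flowbox sufficiently narrow.
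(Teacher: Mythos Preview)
Your approach is correct and essentially identical to the paper's own proof, which also just combines Lemma~\ref{rot} with Lemma~\ref{smallang}. The one point you omit is that Lemma~\ref{smallang} is stated for $X\in\mathfrak{X}_{\mu}^{\infty}(M)$, not $\mathfrak{X}_{\mu}^{1}(M)$ (its proof uses the flowbox linearisation Lemma~\ref{trivial} and the Pasting Lemma~\ref{AM}, both of which need higher regularity); the paper handles this by a preliminary $C^1$-approximation of $X$ by a $C^\infty$ divergence-free field via Zuppa's theorem~\cite{Z}, and you should insert the same step before invoking Lemma~\ref{smallang}.
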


\begin{proof}
We may assume that $X\in \mathfrak{X}_{\mu}^{\infty}(M)$ by some preliminary perturbation using Zuppa's Theorem (\cite{Z}). This proposition follows from Lemmas~\ref{rot} and~\ref{smallang}.
\end{proof}

\medskip

Now we consider a simple lemma which relates the usual norm and the norm of the maximum. Given $x,y\in M$ and $T:\mathbb{R}^{2}_{x}\rightarrow \mathbb{R}^{2}_{y}$ any linear map represented by the matrix
\begin{equation}\label{maximum}
T=\begin{pmatrix}
a_{11}  &  a_{12} \\
a_{21}  &  a_{22} \\
\end{pmatrix}, 
\end{equation}
with respect to the invariant $1$-dimensional splitting $E^{1}_{x}\oplus E^{2}_{x}$ and $E^{1}_{y}\oplus E^{2}_{y}$. We define the norm of the maximum by $\|T\|_{*}=\text{max}\{|a_{11}|,|a_{12}|,|a_{21}|,|a_{22}|\}$.
\begin{lemma}\label{max}
Given any linear map $T$ as above, we suppose that $\measuredangle(E^{1}_{\sigma},E^{2}_{\sigma}
)>\theta$ for $\sigma=x,y$, then $T$ satisfies:
\begin{description}
 \item 1) $\|T\|\leq{4\sin^{-1}(\theta)\|T\|_{*}}$.
\item  2) $\|T\|_{*}\leq{\sin^{-1}(\theta)\|T\|}$.
\end{description}
\end{lemma}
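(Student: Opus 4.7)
The plan is to reduce both inequalities to a single planar fact about pairs of unit vectors meeting at an angle bounded away from $0$ and $\pi$. Let $e^i_\sigma$ be unit vectors spanning $E^i_\sigma$, for $\sigma\in\{x,y\}$ and $i\in\{1,2\}$, so that $T(e^j_x)=a_{1j}\,e^1_y + a_{2j}\,e^2_y$. Let $\phi_\sigma=\angle(e^1_\sigma,e^2_\sigma)\in(0,\pi)$. The hypothesis gives $\sin(\phi_\sigma)\geq \sin(\theta)$ for both $\sigma$. The basic geometric fact I will use is: for any $w=a\,e^1_\sigma+b\,e^2_\sigma$, projecting $w$ onto the direction perpendicular to $e^2_\sigma$ shows that $\|w\|\geq |a|\sin(\phi_\sigma)\geq |a|\sin(\theta)$, and symmetrically $\|w\|\geq |b|\sin(\theta)$.

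For part $2)$, I apply this to the image vectors. Since $T(e^j_x)=a_{1j}\,e^1_y+a_{2j}\,e^2_y$, the observation above yields
\[
|a_{ij}|\leq \frac{\|T(e^j_x)\|}{\sin(\theta)}\leq \frac{\|T\|}{\sin(\theta)},
\]
for all $i,j\in\{1,2\}$, which is exactly $\|T\|_*\leq \sin^{-1}(\theta)\|T\|$.

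For part $1)$, I control an arbitrary unit vector in the domain. Writing a unit vector $v=c_1 e^1_x+c_2 e^2_x$, the same geometric fact applied in $\mathbb{R}^2_x$ gives $|c_1|,|c_2|\leq 1/\sin(\theta)$. Then
\[
Tv=(c_1 a_{11}+c_2 a_{12})\,e^1_y+(c_1 a_{21}+c_2 a_{22})\,e^2_y,
\]
and the triangle inequality (together with $\|e^i_y\|=1$) gives
\[
\|Tv\|\leq |c_1 a_{11}+c_2 a_{12}|+|c_1 a_{21}+c_2 a_{22}|\leq 2(|c_1|+|c_2|)\,\|T\|_*\leq \frac{4}{\sin(\theta)}\,\|T\|_*.
\]
Taking the supremum over unit $v$ yields $\|T\|\leq 4\sin^{-1}(\theta)\|T\|_*$.

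No step here is genuinely hard; the only subtle point is justifying the one-line geometric inequality $\|a e^1+b e^2\|\geq |a|\sin(\phi)$ in a way that handles both $\phi$ and $\pi-\phi$ uniformly, which I handle by noting that $\sin(\phi_\sigma)=\sin(\pi-\phi_\sigma)$ so only the unoriented angle between the lines matters. Everything else is bookkeeping with the triangle inequality.
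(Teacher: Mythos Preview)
Your proof is correct and complete. The paper itself does not give a proof of this lemma but simply refers the reader to \cite{BV2}, Lemma~4.5; your argument supplies the elementary details, using the orthogonal-projection inequality $\|a\,e^1+b\,e^2\|\geq |a|\sin(\phi)$ together with the triangle inequality, exactly as one would expect for this kind of norm comparison.
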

\begin{proof}
See~\cite{BV2} Lemma 4.5. 
\end{proof}

\begin{lemma}\label{interchange}
Given $X\in\mathfrak{X}_{\mu}^{\infty}(M)$, $\epsilon>0$ and $\theta>0$. There exists $m(\epsilon,\theta)\in\mathbb{N}$ such that if $p\in M$ is a 
periodic hyperbolic point, with period $\tau>m$, such that:
\begin{description}
 \item a) $\measuredangle(N_{x_{t}}^{u},N_{x_{t}}^{s})>\theta$ for all $x_{t}:=X^{t}(p)$, $0\leq t\leq \tau$ and
 \item b) for some $x_{t}=X^{t}(p)$, with $0\leq t\leq \tau$, we have $x_{t}\in\Delta_{m}(X)$,
\end{description}
then there exists a conservative $C^{1}$ perturbation $Y$ of $X$,
and a point $q=X^s(p)$ in the periodic orbit through $p$, such that:
\begin{description}
 \item i) $\|X-Y\|_{C^1}<\epsilon$;
 \item ii) $X=Y$ outside a small neighbourhood of
$\{ X^t(q): 0\leq t \leq m\}$;
 \item iii) $P_{Y}^{m}(N_{q}^{u})=N_{X^{m}(q)}^{s}$.
\end{description}
\end{lemma}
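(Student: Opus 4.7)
The plan is to take $q:=X^{t^*}(p)$, where $t^*$ is the time given by hypothesis (b), so $q\in\Delta_m(X)$, and build $Y$ by inserting small rotations at each of $m$ consecutive time-$1$ subarcs of $\{X^t(q):0\leq t\leq m\}$. Writing $x_j:=X^j(q)$, Lemma~\ref{smallang} (with Remark~\ref{smallangle}) produces, for each $\xi_j$ in the tolerance interval $(-\delta(\epsilon),\delta(\epsilon))$ supplied by that lemma, a local divergence-free $\epsilon$-perturbation of $X$ supported in a thin flowbox around the $j$-th subarc and realising $P_{Y_j}^{1}(x_j)=P_X^{1}(x_j)\circ R_{\xi_j}$. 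Choosing the flowboxes pairwise disjoint, the Pasting Lemma~\ref{AM} splices them into a single $Y\in\mathfrak{X}^{1}_{\mu}(M)$ verifying (i) and (ii), with composite linear Poincar\'e flow
$$P_Y^m(q)=P_X^{1}(x_{m-1})\,R_{\xi_{m-1}}\cdots P_X^{1}(x_0)\,R_{\xi_0}.$$

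The content of the lemma is to select the $\xi_j$'s to ensure (iii). I would pass to angular coordinates $\phi$ on the projective normal bundle, with $\phi=0$ at $N^u_{x_j}$ and $\phi=\Theta_j\in[\theta,\pi/2]$ at $N^s_{x_j}$ (the uniform lower bound $\theta$ being hypothesis (a)). Denoting by $\rho_j$ the projective action of $P_X^{1}(x_j)$, property (iii) becomes $\phi_m=\Theta_m$ under the recursion $\phi_{j+1}=\rho_j(\phi_j+\xi_j)$ starting from $\phi_0=0$. The shooting map $\Phi:\vec\xi\mapsto\phi_m$ is continuous, with $\Phi(\vec 0)=0$.

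The key quantitative input is hypothesis (b), which yields the bounded projective distortion bound
\[
  \prod_{j=0}^{m-1}\rho_j'(0)=\frac{\|P_X^m(q)|_{N^s_q}\|}{\|P_X^m(q)|_{N^u_q}\|}\cdot\frac{\sin\Theta_m}{\sin\Theta_0}\geq\frac{\sin\theta}{2},
\]
so that $P_X^m(q)$ is, projectively, quantitatively close to an isometry, with distortion controlled by $\theta$ and $\|X\|_{C^1}$ alone. Combined with Lemma~\ref{max}, which applies thanks to the transversality lower bound in (a), this implies that each inserted small rotation contributes additively to $\phi_m$ up to a bounded multiplicative constant $C=C(\theta,\|X\|_{C^1})$. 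Hence for $m\geq m(\epsilon,\theta):=\lceil C\pi/(2\delta(\epsilon))\rceil$, the image of $\Phi$ covers a neighbourhood of the origin of projective length exceeding $\Theta_m\leq\pi/2$, and an intermediate-value / implicit function argument produces a $\vec\xi\in(-\delta(\epsilon),\delta(\epsilon))^m$ with $\Phi(\vec\xi)=\Theta_m$, yielding (iii). The main obstacle is precisely this quantitative accumulation step: one must track how each small rotation survives propagation through the intervening maps $\rho_j$. Hypothesis (b) is essential here, since without it a single strongly contracting $\rho_j$ between two insertions could collapse the cumulative angular effect and force some $|\xi_j|$ above the tolerance $\delta(\epsilon)$.
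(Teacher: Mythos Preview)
Your overall architecture — place $m$ disjoint $\delta$-rotations along the segment $\{X^t(q):0\le t\le m\}$ and run a shooting argument on the projective angle — is the right one, and it matches the second half of the paper's proof. But the central quantitative step has a gap.

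You deduce from hypothesis (b) the lower bound $\prod_{j=0}^{m-1}\rho_j'(0)\ge \sin(\theta)/2$ and then assert that ``each inserted small rotation contributes additively to $\phi_m$ up to a bounded multiplicative constant $C$''. This does not follow. To first order, the contribution of $\xi_j$ to $\phi_m$ is the \emph{partial} product $\prod_{i=j}^{m-1}\rho_i'(0)$, and your displayed inequality controls only the full product (the case $j=0$). Nothing in (b) bounds the intermediate partial products from below: for instance, if $\rho_i'(0)=2$ for $i<m/2$ and $\rho_i'(0)=1/2$ for $i\ge m/2$, the full product equals $1$ yet $\sum_j\prod_{i\ge j}\rho_i'(0)=O(1)$, so the linearised reach of $\Phi$ is only $O(\delta)$, independent of $m$. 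In such a situation your choice $m\approx C\pi/(2\delta)$ gives no improvement, and the accumulation argument stalls. Your last sentence correctly identifies the danger (a strongly contracting $\rho_j$ collapsing the angular effect), but hypothesis (b) does not exclude it — it only excludes it \emph{over the full interval} $[0,m]$.

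The paper's proof closes this gap by a dichotomy on the intermediate ratios $\|P_X^{r}(x_t)|_{N^s}\|/\|P_X^{r}(x_t)|_{N^u}\|$ for $0\le t,\,t+r\le m$. Either some such ratio exceeds a threshold $c=c(X,\epsilon,\theta)$: then a single $\delta$-rotation at $x_t$ (Lemma~\ref{smallang}) sends $N^u$ to a direction $v$ which the large-ratio segment $P_X^r$ carries $\delta$-close to $N^s$, and a second $\delta$-rotation at $x_{t+r}$ (Remark~\ref{smallangle}) finishes the job. Or else all intermediate ratios lie in $[d^{-1},d]$ for some $d=d(c)$ (using (b) for the lower bound); in this case every partial product is uniformly bounded below, your accumulation argument is valid, and one takes $m$ large relative to $d$. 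You need to insert this dichotomy (or an equivalent device) before the accumulation step.
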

\begin{proof}
The proof is essentially the same of Lemma 3.15 of~\cite{B2}, nevertheless we outline it where. Using a) we can consider that the hyperbolic splitting along the orbit is orthogonal, by just using an area-preserving change of coordinates with norm bounded by $\sin^{-1}(\theta)$. We can choose a constant $c=c(X,\epsilon,\theta)>0$ such that if 
\begin{equation}\label{two conditions}
\exists r,t\in\mathbb{R} \colon 0\leq r+t\leq m\text { such that }\frac{\|P_{X}^{r}(x_t)|_{{N}^{s}_{x_t}}\|}{\|P_{X}^{r}(x_t)|_{{N}^{u}_{x_t}}\|}> c
\end{equation}
then, there is a direction $v\in {N}_{x_t}\setminus\{0\}$ with 
$$\measuredangle(v,N^{s}(x_{t}))<\delta \text { and }\measuredangle(P_{X}^{r}(x_{t})\cdot v,N^{u}(x_{t+r}))<\delta,$$
where $\delta=\delta(X,\epsilon)$ is given by Lemma~\ref{smallang}.
In Case (\ref{two conditions}) we proceed with two $\epsilon$-small perturbations: one using Lemma~\ref{smallang} which sends $N^{u}_{x_{t}}$ onto $v\cdot \mathbb{R}$, another following Remark~\ref{smallangle} which sends $P_{X}^{r}(x_{t}) (v\cdot \mathbb{R})$ onto $N^{s}_{x_{t+r}}$, and we are over. 

Otherwise, if (\ref{two conditions}) is false, then there exists $d=d(c)\in\mathbb{R}$ such that for all $r\in[0,m]$ we have:
\begin{equation}\label{d}
d^{-1}\leq\frac{\|P_{X}^{r}(p)|_{{N}^{s}_{p}}\|}{\|P_{X}^{r}(p)|_{{N}^{u}_{p}}\|}\leq d. 
\end{equation}
We take $m=m(\epsilon,\theta)$ sufficiently large compared with $d$. Then we can combine $m$ small rotations with disjoint supports and $\epsilon$-small, obtained through Lemma~\ref{smallang}, to achieve the desired effect. Notice that the fact that the perturbations were disjointly supported guarantees that the size of the combined perturbation is the maximum and not the sum of all the $m$ perturbations. 
\end{proof}

\medskip

\begin{lemma}\label{mainlemma}
Let $X\in\mathfrak{X}_{\mu}^{\infty}(M)$, $\epsilon>0$ and $\theta>0$ be given. 
Let $m=m(\epsilon,\theta)\in\mathbb{N}$ be given by Lemma~\ref{interchange}. 
Then there exist $K=K(\theta,m)\in\mathbb{R}$ such that 
given any hyperbolic periodic orbit $\gamma$ with period $\tau>m$ 
satisfying a) and b) of Lemma~\ref{interchange},
then there exists a conservative $C^{1}$-perturbation $Y$ of $X$ such that:
\begin{enumerate}
\item $\| X-Y\|_{C^1}<\epsilon$;
\item  $\|P_{Y}^{\tau}(q)\|<K$, for some $q\in\gamma$.
\end{enumerate}
\end{lemma}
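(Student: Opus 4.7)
The plan is to apply Lemma~\ref{interchange} once to produce a conservative $C^1$-perturbation $Y$ of $X$, with $\|Y-X\|_{C^1}<\epsilon$, which coincides with $X$ outside a small tubular neighborhood of the time-$m$ segment $\{X^t(q):0\le t\le m\}\subset\gamma$ and satisfies $P_Y^m(q)(N^u_q)=N^s_{X^m(q)}$. Since the perturbations of Lemmas~\ref{smallang} and~\ref{interchange} are implemented through normal rotations inside flowboxes, the orbit $\gamma$ is still a $Y^t$-orbit of period $\tau$. The bound $K$ cannot be obtained at $q$ itself, where $P_Y^\tau(q)$ still has spectral radius $|\sigma|$ (the expanding eigenvalue of $\gamma$); instead I will exhibit a different point $q'=X^{s^*}(q)\in\gamma$ where the return matrix is ``balanced''.

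First I pass, via a conservative change of coordinates along $\gamma$ whose $C^1$-norm is bounded by $\sin^{-1}\theta$, to a frame in which $N^u\oplus N^s$ is orthogonal. In this frame $P_X^t|_\gamma$ is diagonal outside the perturbation window, and the interchange conclusion forces
\[
P_Y^m(q)=\begin{pmatrix}0 & x_{12}\\ x_{21} & x_{22}\end{pmatrix},
\]
with $|x_{ij}|\le K_1=K_1(X,\epsilon,m)$ from a Gronwall bound over the fixed time $m$ (using Lemma~\ref{AM}). For $s\in[m,\tau]$ and $q'=X^s(q)$, since $Y=X$ outside the perturbation tube,
\[
P_Y^\tau(q')=P_X^{s-m}(X^m q)\circ P_Y^m(q)\circ P_X^{\tau-s}(q')=\begin{pmatrix}0 & \alpha(s)\\ \beta(s) & \gamma(s)\end{pmatrix},
\]
where, writing $P_X^{\tau-s}(q')=\mathrm{diag}(\lambda_u^A,\lambda_s^A)$ and $P_X^{s-m}(X^m q)=\mathrm{diag}(\lambda_u^C,\lambda_s^C)$ in the orthogonal frame, $\alpha(s)=x_{12}\lambda_u^C\lambda_s^A$, $\beta(s)=x_{21}\lambda_s^C\lambda_u^A$, and $\gamma(s)=x_{22}\lambda_s^C\lambda_s^A$. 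Two identities are crucial: multiplicativity of $P_X^t|_{N^s}$ along the invariant sub-bundle $N^s$ gives $\lambda_s^C\lambda_s^A\equiv\|P_X^{\tau-m}(X^m q)|_{N^s}\|=|\sigma|^{-1}/\|P_X^m(q)|_{N^s}\|$, independent of $s$; and hypothesis (b) of Lemma~\ref{interchange}, giving $\|P_X^m(q)|_{N^s}\|/\|P_X^m(q)|_{N^u}\|\ge 1/2$, together with the a priori bound $\|P_X^m(q)|_{N^u}\|\cdot\|P_X^m(q)|_{N^s}\|\asymp 1$ from~(\ref{volume}), forces a uniform lower bound on $\|P_X^m(q)|_{N^s}\|$; hence $|\gamma(s)|\equiv|\gamma|\le K_2=K_2(X,\epsilon,m)$. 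On the other hand $|\alpha(s)\beta(s)|=|\det P_Y^\tau(q')|$ equals $1$ because $q'$ is a periodic point of the divergence-free flow $Y$ and~(\ref{volume}) telescopes over the full period.

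As $s$ ranges over $[m,\tau]$, the ratio $|\alpha(s)|/|\beta(s)|=|x_{12}/x_{21}|\cdot(\lambda_u^C\lambda_s^A)/(\lambda_s^C\lambda_u^A)$ is continuous and runs from order $|\sigma|^{-2}$ at $s=m$ to order $|\sigma|^{2}$ at $s=\tau$ (since $\gamma$ is genuinely hyperbolic with expansion $|\sigma|$; the case of bounded $|\sigma|$ is trivial). By the intermediate value theorem there is $s^*\in[m,\tau]$ with $|\alpha(s^*)|=|\beta(s^*)|$, and then both equal $1$ by the identity $|\alpha\beta|\equiv 1$. The matrix $P_Y^\tau(q')$ then has all entries bounded by $\max(1,K_2)$, so its operator norm is bounded by $\sqrt{2+K_2^2}$; reverting the orthogonalizing change of basis costs at most the factor $\sin^{-2}\theta$, and we obtain $\|P_Y^\tau(q')\|<K=K(\theta,m)$ as required. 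The main obstacle is the determinant bookkeeping --- verifying that neither $|\alpha\beta|$ nor $|\gamma|$ secretly hides a residual factor of $|\sigma|$, which rests squarely on~(\ref{volume}) and on the uniform lower bound on $\|P_X^m(q)|_{N^s}\|$ coming from hypothesis (b) of Lemma~\ref{interchange}; once these are in place, the intermediate value step is immediate.
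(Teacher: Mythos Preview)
Your approach is essentially identical to the paper's: apply Lemma~\ref{interchange} once, work in a frame diagonalizing the hyperbolic splitting (at the cost of a $\sin^{-1}\theta$ distortion), and use the intermediate value theorem to locate a base point $q'$ on $\gamma$ at which the expansion ``before'' and ``after'' the interchange window balance, so that the three-fold product $P_X^{\text{after}}\circ P_Y^m\circ P_X^{\text{before}}$ has all entries controlled by the time-$m$ map alone. The paper writes the IVT balancing as $\int_0^s\alpha=\int_{s+m}^\tau\alpha$ in the infinitesimal generator, which is exactly your condition $|\alpha(s^\ast)|=|\beta(s^\ast)|$ rephrased; the paper also separates out the ``small $\sigma$'' case just as you do.

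One point needs correction. You invoke hypothesis~(b) of Lemma~\ref{interchange} to assert $\|P_X^m(q)|_{N^s}\|/\|P_X^m(q)|_{N^u}\|\ge 1/2$ at the interchange point $q$, but hypothesis~(b) only guarantees that \emph{some} point $x_t$ of the orbit lies in $\Delta_m(X)$, and the point $q$ produced by Lemma~\ref{interchange} need not be that point. Fortunately your conclusion---a uniform lower bound on $\|P_X^m(q)|_{N^s}\|$ depending only on $(X,m)$---holds for a simpler reason: the a~priori Gronwall bound over the fixed time~$m$ gives $\|P_X^m(q)|_{N^s}\|\ge e^{-jm}$ directly (or, equivalently, combine the volume relation~(\ref{volume}) with the upper bound $\|P_X^m(q)|_{N^u}\|\le e^{jm}$). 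With this correction your bound on $|\gamma(s)|$ stands and the rest of the argument goes through.
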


\begin{proof}  
Let $j:=\|DX\|+1$. Notice that for every  small enough $C^1$-perturbation $Z$ of $X$,
and any point $z\in M$, we have $\| P_Z^t(z)\| \leq e^{j\,t}$.
By lemma~\ref{max} we also have  
$\| P_Z^t(z)\|_\ast \leq (\sin^{-1}\theta)\,e^{j\,t}$,
where the norm $\|\cdot\|_\ast$ refers to the $P_X^\tau$-invariant decompositions
at the points $z$ and $X^t(z)$.
We set $K(\theta, m)= 4\,(\sin^{-2}\theta)\,e^{j\,m}$.
Once again we assume that our flow is linear by using Lemma~\ref{trivial}.
Take any hyperbolic periodic orbit $\gamma$ with period $\tau>m$.
Let $Y\in\mathfrak{X}_{\mu}^{\infty}(M)$ be the perturbation provided by
Lemma~\ref{interchange}, corresponding to the same $\epsilon$ and $\theta$
of this lemma. Without loss of generality, we may assume that
the point $q$ given in Lemma~\ref{interchange} is $q=p$.
Take matrix representations diagonalizing the linear 
Poincar\'e flow.
Notice that, by a) of Lemma~\ref{interchange}, 
this representation is conjugated to the original one 
by a conservative change of coordinates with norm bounded by $\sin^{-1}(\theta)$. 
Given $s\in [0,\tau-m]$ let $q=X^{-s}(p)$. Take $\phi(t)$, $\alpha(t)$ and $\sigma>1$ such that
$$P_{X}^{t}(q)=\begin{pmatrix}
\phi(t)  &  0 \\
0  &  \phi^{-1}(t) \\
\end{pmatrix},P_{X}^{\tau}(q)=\begin{pmatrix}
\sigma  &  0 \\
0  &  \sigma^{-1} \\
\end{pmatrix}\text{  and  } A(q,t)=\begin{pmatrix}
\alpha(t)  &  0 \\
0  &  -\alpha(t) \\
\end{pmatrix},$$
where $A(q,t)$ is the infinitesimal generator of $P_{X}^{t}(q)$, 
that is, 
$$ \frac{d}{dt} P_{X}^{t}(q)  = A(q,t)\cdot P_{X}^{t}(q)\;. $$ 
Clearly, $\phi(t)=e^{\int_{0}^{t}\alpha(r)dr}$\, 
and \,  $\phi(\tau)=e^{\int_{0}^{\tau}\alpha(r)dr}=\sigma$.
Notice also that for $t\in [0,\tau]$,\,
$\phi(t)=\| P_X^t(q)\|_\ast \leq (\sin^{-1}\theta)\,e^{j\,t}$.

We consider two cases:

If $\sigma \leq (\sin^{-1}\theta)\,e^{j\, m}$ consider
the $ P_X^\tau(q)$-invariant decomposition of $\R^2$. 
With respect to such splitting
 $ \|P_X^\tau(q)\|_\ast = \sigma$.
Then, applying  Lemma~\ref{max} \emph{1)}, we have
$$\|P^\tau_X(q)\| \leq 4\,(\sin^{-2}\theta)\,e^{j\,m} =  K\; , $$
 and there
is no need to perturb $X$. 

Otherwise, if  $\sigma > (\sin^{-1}\theta)\,e^{j\, m}$,
we define the continuous function
$$ \begin{array}{cccc}
\Theta(t)\colon & [0,\tau-m] & \longrightarrow & \mathbb{R} \\
& t & \longmapsto & \int_{0}^{t}\alpha(r)dr-\int_{t+m}^{\tau}\alpha(r)dr.
\end{array}$$
In this case, we have 
$$e^{\Theta(0)}= e^{ -\int_m^\tau \alpha } = 
 e^{\int_0^m \alpha  -\log\sigma} = \frac{\phi(m)}{\sigma}
 \leq \frac{(\sin^{-1}\theta)\,e^{j\,m}}{\sigma} <1\;,$$ 
which implies  $\Theta(0)< 0$.
We also have  
$$e^{\Theta(\tau-m)} = e^{\int_0^{\tau-m} \alpha} = 
 e^{\log\sigma- \int_{\tau-m}^\tau \alpha } =
\frac{\sigma}{e^{\int_{\tau-m}^\tau \alpha}}
 \geq \frac{\sigma}{(\sin^{-1}\theta)\,e^{j\,m}}  >1 \;,$$
which implies that $\Theta(\tau-m)>0$.
By the intermediate value theorem, there exists $s\in[0,\tau-m]$ such that 
\begin{equation}\label{defs}
\int_{0}^{s}\alpha(r)dr=\int_{s+m}^{\tau}\alpha(r)dr. 
\end{equation}
We fix such an $s\in[0,\tau-m]$ and $q=X^{-s}(p)$.
Since 
$$P_{X}^{\tau}(q)=P_{X}^{\tau-s-m}(X^{m}(p))\circ P_{X}^{m}(p)\circ P_{X}^{s}(q)$$ 
we have that $P_{X}^{\tau}(q)$ has the following matrix product representation 
$$
\begin{pmatrix}
e^{\int_{s+m}^{\tau}\alpha(r)dr}  &  0 \\
0  &  e^{-\int_{s+m}^{\tau}\alpha(r)dr}  \\
\end{pmatrix}\begin{pmatrix}
e^{\int_{s}^{s+m}\alpha(r)dr}   &  0 \\
0  &  e^{-\int_{s}^{s+m}\alpha(r)dr}  \\
\end{pmatrix}\begin{pmatrix}
\phi(s)  &  0 \\
0  &  \phi^{-1}(s) \\
\end{pmatrix}.
$$
Recall that $Y$, given by  Lemma~\ref{interchange},
is a conservative perturbation of $X$, supported in 
$\{X^{t}(p) \colon t\in[0,m]\}$, and such that $P_{Y}^{m}(N_{p}^{u})=N_{X^{m}(p)}^{s}$.
Therefore
$$P_{Y}^{m}(p)=\begin{pmatrix}
0   &  m_{1} \\
m_{2}  &  m_{3}  \\
\end{pmatrix},$$
for some constants $m_{i}$, $i=1,2,3$.
 A simple computation
(one just replaces the matrix above as the middle matrix
in the previous three-fold matrix composition) shows that
 $P_{Y}^{\tau}(q)=P_{X}^{\tau-s-m}(X^{m}(p))\circ P_{Y}^{m}(p)\circ P_{X}^{s}(q)$
is given by
$$P_{Y}^{\tau}(q)=
\begin{pmatrix}
0  &  m_{1}\phi^{-1}(s)e^{\int_{s+m}^{\tau}\alpha(r)dr} \\
m_{2}\phi(s)e^{-\int_{s+m}^{\tau}\alpha(r)dr}  &  m_{3}\phi^{-1}(s)e^{-\int_{s+m}^{\tau}\alpha(r)dr}  \\
\end{pmatrix} =
\begin{pmatrix}
0  &  m_{1} \\
m_{2}   &  m_{3}^\prime   \\
\end{pmatrix}\;,
$$ 
with $\vert m_3^\prime \vert = 
\vert m_3\vert \,e^{-\int_0^s \alpha -\int_{s+m}^{\tau}\alpha}\leq \vert m_3\vert $.
We have used~(\ref{defs}) here.
By Lemma~\ref{max} \emph{2)}
$$\max_{i=1,2,3} \vert m_{i}\vert = \|P_{Y}^{m}(p)\|_\ast \leq
\sin^{-1}(\theta)\|P_{Y}^{m}(p)\|
\leq \sin^{-1}(\theta)e^{jm}\;,$$
which implies that
$\|P_{Y}^{\tau}(q)\|_\ast \leq \max_{i=1,2,3} \vert m_{i}\vert
\leq \sin^{-1}(\theta)e^{jm}$. 
Finally, using Lemma~\ref{max} \emph{1)} we get 
$$\|P_{Y}^{\tau}(q)\|<4\sin^{-1}(\theta)\|P_{Y}^{\tau}(q)\|_{*}<4\sin^{-2}(\theta)e^{jm}=K,$$
and the lemma is proved.

\end{proof}

\begin{lemma}\label{perturb}
Let $X\in\mathfrak{X}_{\mu}^{\infty}(M)$, $\epsilon>0$ and $\theta>0$. 
Let $m=m(\epsilon/3,\theta)\in\mathbb{N}$ be given by Lemma~\ref{interchange}. 
There exists $T>m$ such that 
given any hyperbolic periodic point $q$ with period $\tau>T$ and satisfying a) and b) 
of Lemma~\ref{interchange}, then there exists a conservative $C^{1}$ perturbation 
$Z$ of $X$  such that 
\begin{enumerate}
\item $\|X-Z\|_{C^1} <\epsilon $;
\item $q$ is an elliptical point for $Z$, with the same period $\tau$.
\end{enumerate}
\end{lemma}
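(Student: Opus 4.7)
The plan is to first apply Lemma \ref{mainlemma} to bound the operator norm of the Poincar\'e return map along the orbit, and then use one (or a concatenation of) small rotation perturbations from Lemma \ref{smallang} (together with Remark \ref{smallangle}) to convert the hyperbolic return map into an elliptic one.

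First, I would apply Lemma \ref{mainlemma} with parameters $\epsilon/3$ and $\theta$, obtaining a conservative $C^{1}$-perturbation $Y$ of $X$ with $\|X-Y\|_{C^1}<\epsilon/3$ and $\|P_{Y}^{\tau}(q_{0})\|<K$ for some $q_{0}$ on the periodic orbit of $q$, where $K=K(\theta,m)$ does not depend on $\tau$. Inspecting that proof, in a suitable orthogonalized hyperbolic frame the return map has the explicit form
\[
P_{Y}^{\tau}(q_{0})=\begin{pmatrix} 0 & m_{1}\\ m_{2} & m_{3}'\end{pmatrix},
\]
with $m_{1}m_{2}=-1$ (so $|m_{1}-m_{2}|\geq 2$) and $|m_{3}'|\leq K$. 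Its trace is $m_{3}'$; if $|m_{3}'|<2$ then $P_{Y}^{\tau}(q_{0})$ is already elliptic and one may take $Z=Y$ (after a tiny extra perturbation to enforce strict inequality).

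In the remaining case $|m_{3}'|\geq 2$, I would compose further with a rotation $R_{\xi^{\ast}}$ in order to kill the trace. Since $\mathrm{tr}(P_{Y}^{\tau}(q_{0})\cdot R_{\xi})=(m_{1}-m_{2})\sin\xi+m_{3}'\cos\xi$, the equation $\mathrm{tr}=0$ admits a solution $\xi^{\ast}$ with $|\tan\xi^{\ast}|\leq K/2$, hence $|\xi^{\ast}|<\pi/2$, independent of $\tau$. This angle may well exceed the size $\delta=\delta(\epsilon/3,X)$ admissible in a single application of Lemma \ref{smallang}, so I would split $\xi^{\ast}$ into $N:=\lceil|\xi^{\ast}|/\delta\rceil$ equal pieces and realize them as $N$ disjointly supported rotation perturbations, each of angle at most $\delta$, located in $N$ time-one flowbox segments distributed along the orbit of $q_{0}$. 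Setting $T:=m+2N+2$ guarantees enough room for the $N$ disjoint supports when $\tau>T$. In the linearized coordinates of Lemma \ref{trivial}, a direct computation with the fundamental solutions as in Lemma \ref{abstr:perturb} shows that rotations placed in disjoint time-one intervals compose on the right by their angle sum, so the net effect is right-multiplication of $P_{Y}^{\tau}(q_{0})$ by $R_{\xi^{\ast}}$. The Arbieto-Matheus Pasting Lemma extends each local linearized rotation back into a conservative perturbation of $Y$; disjointness of the supports ensures that the combined $C^1$-size of this second perturbation is the maximum, hence $<\epsilon/3$.

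Altogether, the final $Z$ satisfies $\|X-Z\|_{C^1}<2\epsilon/3<\epsilon$, and $P_{Z}^{\tau}(q_{0})$ has zero trace, hence is elliptic; since ellipticity is a property of the orbit and all perturbations are supported in small tubular neighborhoods that do not change the recurrence times, $q$ itself is elliptical for $Z^{t}$ with unchanged period $\tau$. The main obstacle is the final compositional step: one must carefully check that the $N$ rotations, carried out in the linearizing coordinates of Lemma \ref{trivial} at different points along the orbit and then pasted back via Lemma \ref{AM}, indeed produce the clean relation $P_{Z}^{\tau}(q_{0})=P_{Y}^{\tau}(q_{0})\cdot R_{\xi^{\ast}}$. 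The uniform lower bound $|m_{1}-m_{2}|\geq 2$ furnished by Lemma \ref{mainlemma} is what keeps the required total rotation $\xi^{\ast}$ bounded independently of $\tau$, making the choice of $N$ (and hence of $T$) depend only on $\epsilon$, $\theta$, and $X$.
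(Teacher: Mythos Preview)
Your approach has a genuine gap at the compositional step you yourself flag as the main obstacle. The claim that $N$ small rotations, realized in $N$ disjoint time-one intervals along the orbit, compose on the right to a single rotation $R_{\xi^\ast}$ is simply false. If you insert $R_{\xi_i}$ at time $t_i$ via Lemma~\ref{abstr:perturb}, the perturbed fundamental solution at time $\tau$ factors as
\[
\Phi_B(\tau)=\Phi_A(\tau)\cdot\bigl[\Phi_A(t_N)^{-1}R_{\xi_N}\Phi_A(t_N)\bigr]\cdots\bigl[\Phi_A(t_1)^{-1}R_{\xi_1}\Phi_A(t_1)\bigr],
\]
and the conjugates $\Phi_A(t_i)^{-1}R_{\xi_i}\Phi_A(t_i)$ are not rotations unless $\Phi_A(t_i)$ is orthogonal. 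They do not even commute with each other, so there is no reason their product should be $R_{\xi_1+\cdots+\xi_N}$. Your trace computation, which is correct for a \emph{single} right-multiplication by $R_{\xi^\ast}$, therefore says nothing about the map you actually produce. Since a single rotation of size up to $\arctan(K/2)$ is far too large for one application of Lemma~\ref{smallang}, the argument does not close.

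The paper circumvents exactly this difficulty by choosing perturbations that \emph{do} compose cleanly along the orbit: instead of rotations, it uses at each $p_i$ a matrix $e^{tQ_i}$ with the same eigenspaces $N^u_{p_i}$, $N^s_{p_i}$ as the (new) return map of $Y$. In the splitting coordinates these are diagonal, so composing them over $T$ disjoint intervals simply multiplies the expansion on $N^u$ by $e^{-tT\alpha}$. Choosing $T>\log K/\alpha$ and using the intermediate value theorem in $t$ produces a parameter $t_0$ for which $P_{\tilde Z(t_0)}^\tau(q)=\mathrm{Id}$, after which a \emph{single} small rotation from Lemma~\ref{smallang} yields ellipticity. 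The moral is that to accumulate a large effect from many small local perturbations you must pick perturbations adapted to the invariant splitting, not isotropic rotations.
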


\begin{proof}
Let the data $X\in\mathfrak{X}_{\mu}^{\infty}(M)$, $\epsilon>0$ and $\theta>0$
be given.
Set $j:=\|DX\|+1$. Let $C>0$ be an upper bound for the norm
of the infinitesimal generator of the Poincar\'{e} linear flow
along any arc of regular orbit with length $\leq 1$.
The constant $C$ may be taken to be the product of $j$ with
the constant provided by Lemma~\ref{trivial} for $s=2$ and length one arcs.
Take $\delta:=\delta(C,\epsilon/3)$
given by Lemma~\ref{abstr:perturb}. 
Recall that $m:=m(\epsilon/3,\theta)\in\N$ was taken 
according to Lemma~\ref{interchange}.
Take $K:=K(\theta,m)$ according to Lemma~\ref{mainlemma}.
Consider now the following function.
For each $\theta>0$ and $\alpha>0$ let $\rho_\theta(\alpha)$
be the  Euclidean norm $\|e^Q-I\|$, for all matrices $Q\in sl(2,\R)$
with eigenvalues $\{\alpha,-\alpha\}$, and angle $\theta$
between its eigen-spaces. It is clear that all such matrices
are orthogonally conjugated, and, therefore, have the same
Euclidean norm. This function has the following properties:
\begin{enumerate}
\item Fixing $\theta>0$, the function $\alpha\mapsto \rho_\theta(\alpha)$
is a strictly increasing diffeomorphism from $]0,+\infty[$ onto $]0,+\infty[$.
\item Fixing $\alpha>0$, the function $\theta\mapsto \rho_\theta(\alpha)$
is a strictly decreasing diffeomorphism from $]0,\pi/2[$ onto $]\alpha,+\infty[$.
\end{enumerate}
Now, for $\delta$ and $\theta$ fixed above, set
$\alpha:=\alpha(\delta,\theta) = (\rho_\theta)^{-1}(\delta)$.
By definition, the number $\alpha>0$ has the following property:
Given any $\theta_1\geq \theta$, we can pick
 hyperbolic matrices $Q\in sl(2,\R)$ such that:
\begin{enumerate}
\item  $\|e^Q-I\|\leq\delta$;
\item $\alpha$ and $-\alpha$ are the eigenvalues of $Q$; and
\item $Q$ has an angle $\theta_1$ between its eigen-spaces.
\end{enumerate}
Finally, we define\,
$T:= \intpart{ \log K /\alpha } +2 $,
where $\intpart{\cdot}$ denotes the {\it integer part function}.
Now, let $\gamma = \{ X^t(p): 0\leq t\leq \tau \}$
be any hyperbolic periodic orbit, with period $\tau\geq T$,
satisfying a) and b) of Lemma~\ref{interchange}. 
Let  $Y\in\mathfrak{X}_{\mu}^{\infty}(M)$ be the vector field
provided by Lemma~\ref{mainlemma}, satisfying \emph{(1)} $\|X-Y\|_{C^{1}}<\epsilon/3$ and
\emph{(2)} $\|P_{Y}^{\tau}(q)\|<K$ for some point $q\in\gamma$. 
Because $\tau>T$, and $T$ is an integer, 
we can take $T$ disjoint time-one intervals $[t_i,t_i+1]$
inside the period $[0,\tau]$, i.e.,
$0\leq t_1<t_1+1 <t_2<t_2+1<\ldots <t_T<t_T+1 \leq \tau$.
Consider the corresponding points
$p_i := X^{t_i}(q)$ and $p_i^\prime:= X^1(p_i)=X^{t_i+1}(q)$.
Take the linear flow maps
$\Phi_i= P_Y^1(p_i):\R^2_{p_i}\to \R^2_{p_i^\prime}$
and let $\theta_i\geq\theta $ be the angle between the
eigen-spaces $N^u_{p_i}$ and $N^s_{p_i}$  of the $P_Y^\tau(p_i)$,
for each $i=1,\ldots, T$.
 Take now $Q_i\in sl(2,\R)$
such that
 $\|e^{Q_i}-I\|\leq\delta$, and
 $Q_i$ has eigen-space $N^u_{p_i}$ with eigenvalue $-\alpha$,
 and has eigen-space $N^s_{p_i}$ with eigenvalue $\alpha$.
Notice these eigen-spaces do make
an angle equal to $\theta_i$.
The product linear map
$\Phi_i\cdot e^{t\,Q_i}:\R^2_{p_i}\to \R^2_{p_i^\prime}$
takes the decomposition $\R^2_ {p_i}=N^u_{p_i}\oplus N^s_{p_i}$
onto the decomposition $\R^2_ {p_i^\prime}=N^u_{p_i^\prime}\oplus N^s_{p_i^\prime}$.
Moreover, we have
$\|\Phi_i\cdot e^{t\,Q_i}|_{N^{u}_{p_i}}\|=\|\Phi_i|_{N^{u}_{p_i}}\|\,e^{-\alpha\,t}$
and
$\|\Phi_i\cdot e^{t\,Q_i}|_{N^{s}_{p_i}}\|=\|\Phi_i|_{N^{s}_{p_i}}\|\,e^{\alpha\,t}$.
Feeding Lemma~\ref{abstr:perturb} with the
 input data $C$, $\epsilon/3$ and $S_i(t)= e^{t\, Q_i}$, $t\in [0,1]$,
and then combining it  with the pasting lemma~\ref{AM},
we get a conservative $C^1$ vector field $Z_i(t)$ such that
$\|Y-Z_i(t)\|\leq \epsilon/3$,\,
$Y=Z_i(t)$ outside a small neighbourhood of the arc $[p_i, p_i^\prime]\subset \gamma$,
and\, $P_{Z_i(t)}^1(p_i)= P_Y^1(p_i)\cdot e^{t\, Q_i}$.
But since all supports of these perturbations are disjoint, we can
glue them into a single  conservative $C^1$ perturbation $\tilde{Z}(t)$ of $X$
such that $\|Y-\tilde{Z}(t)\|\leq \epsilon/3$ and
$Y=\tilde{Z}(t)$ outside a small neighbourhood of $\gamma$.
By way of construction, the vector field $\tilde{Z}(t)$ has the same
invariant decomposition as $Y$. Moreover, we have
\begin{equation}\label{expn}
\varphi(t):=\|P_{\tilde{Z}(t)}^{\tau}(q)|_{N^{u}_{q}}\|= 
\|P_{Y}^{\tau}(q)|_{N^{u}_{q}}\|\,
 e^{-t\,T\,\alpha} <K \,  e^{-t\,T\,\alpha} \;,
\end{equation}
while, by conservative symmetry,
$\|P_{\tilde{Z}(t)}^{\tau}(q)|_{N^{s}_{q}}\|> K \,  e^{t\,T\,\alpha}$.
For $t=0$ we have $\varphi(0)=\|P_{\tilde{Z}(t)}^{\tau}(q)|_{N^{u}_{q}}\|>1$.
But since $T>\log K/\alpha$, we have  $K\,e^{-T\,\alpha}<1$,
and then for $t=1$,
\, $\varphi(1)<K\,e^{-T\,\alpha}<1$.
Therefore, there is some $t\in ]0,1[$ such that
$\varphi(t)=1$. For such $t_0$ we must have $P_{\tilde{Z}(t_0)}^\tau (q)=I$.

Finally, applying Lemma~\ref{smallang} to the periodic orbit
$\gamma$ of $\tilde{Z}(t_0)$, and to some $\xi>0$  (any will do),  
we get a conservative $C^1$ perturbation
$Z$ of $\tilde{Z}(t_0)$ such that
$\|Z-\tilde{Z}(t_0)\|\leq \epsilon/3$ and 
$\gamma$ is an elliptic periodic orbit of $Z$.
The perturbation $Z$ satisfies
$$\| X-Z\| \leq \underbrace{ \| X-Y\| }_{\leq \epsilon/3} + 
\underbrace{\| Y-\tilde{Z}(t_0)\| }_{\leq \epsilon/3}  + 
\underbrace{\| \tilde{Z}(t_0)-Z\| }_{\leq \epsilon/3} \leq 
\epsilon\;, $$
which completes the proof.
\end{proof}

\medskip

Finally by Lemmas~\ref{interchange}, ~\ref{mainlemma} and ~\ref{perturb} we obtain:

\begin{proposition}\label{propB}(Large angle perturbation)
Given $\epsilon>0$, $\theta>0$  and $X\in\mathfrak{X}_{\mu}^{1}(M)$, there exist \, 
$m\in \N$ \, and \,$T\in\N$\, (with $T>>m$) 
 such that given a  periodic hyperbolic point $p\in M$ with period $\tau>T$, 
 satisfying the conditions:
\begin{description}
 \item i) $\measuredangle(N_{q}^{u},N_{q}^{s})\geq \theta$ \, for all $q=X^{t}(p)$, $0\leq t\leq \tau$, and 
 \item ii) the orbit \, $X^t(p)$ \, has not $m$-dominated splitting, i.e.,
 $q=X^t(p)\in\Delta_m(X)$ for some $0\leq t\leq \tau$,
\end{description}
then there is some perturbation $Y\in\mathfrak{X}_{\mu}^{1}(M)$ of $X$ such that\,
 $\|X-Y\|_{C^1}<\epsilon$\, and 
$p$ is an elliptical periodic point for $Y^{t}$ with period $\tau$.
\end{proposition}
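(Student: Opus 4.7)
The plan is to derive Proposition~\ref{propB} as essentially a repackaging of Lemma~\ref{perturb}, needing only a preliminary regularity upgrade via Zuppa's theorem in order to bridge the gap between the $C^1$ hypothesis here and the $C^\infty$ hypothesis used throughout the chain of lemmas.

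First, I would apply Zuppa's theorem to approximate $X$ by a smooth field $\tilde{X}\in \mathfrak{X}^{\infty}_{\mu}(M)$ with $\|X-\tilde{X}\|_{C^1}<\epsilon/2$. Hyperbolic periodic orbits persist under $C^1$-small conservative perturbations, so the orbit through $p$ survives as a hyperbolic periodic orbit of $\tilde{X}$ with period still $\tau$ (keeping the name $p$ by abuse). Both hypothesis i) (the uniform angle lower bound along the orbit) and hypothesis ii) (membership of some iterate in $\Delta_m(X)$) are $C^1$-open conditions on the ambient vector field, so after at most a harmless shrinking of $\theta$ and enlarging of $m$ they continue to hold for $\tilde{X}$.

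Next, I would feed the smooth field $\tilde{X}$, the tolerance $\epsilon/2$, and the (possibly shrunken) angle $\theta$ into Lemma~\ref{perturb}. That lemma fixes $m=m(\epsilon/6,\theta)\in\mathbb{N}$ via Lemma~\ref{interchange}, and then produces an integer $T>m$ depending only on $\tilde{X}$, $\epsilon$ and $\theta$ (hence only on $X$, $\epsilon$, $\theta$ up to the harmless adjustments above). These are the very $m$ and $T$ claimed by Proposition~\ref{propB}. For any hyperbolic periodic orbit of $\tilde{X}$ with period $\tau>T$ satisfying conditions a)--b) of Lemma~\ref{interchange}, Lemma~\ref{perturb} delivers a conservative $C^1$ perturbation $Z$ with $\|\tilde{X}-Z\|_{C^1}<\epsilon/2$ and such that $p$ is an elliptic periodic point of $Z$ with period $\tau$. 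Setting $Y:=Z$ and applying the triangle inequality gives $\|X-Y\|_{C^1}<\epsilon$, which is all that is required.

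All the substantive work is already carried out in Lemmas~\ref{interchange}, \ref{mainlemma} and \ref{perturb}, so no new obstacle arises at this step; the main one, already overcome earlier, was the three-stage scheme encoded in those lemmas: interchange $N^u$ and $N^s$ at one point in order to cap the full-period norm $\|P_Y^\tau(q)\|$ by the uniform constant $K(\theta,m)$, then distribute many disjointly supported time-one hyperbolic rotations of size $\alpha$ along the orbit to drive both Lyapunov exponents toward zero, and finally compose with a small rotation from Lemma~\ref{smallang} to produce a genuinely elliptic return map.
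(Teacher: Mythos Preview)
Your approach is the same as the paper's: the paper's entire proof of Proposition~\ref{propB} is the single sentence ``Finally by Lemmas~\ref{interchange},~\ref{mainlemma} and~\ref{perturb} we obtain'', and your insertion of the Zuppa regularization step is precisely what the paper does explicitly in the parallel situation of Proposition~\ref{propA}. So the overall plan is right and matches the paper.

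There is, however, a quantifier issue in your write-up that you should tighten. You choose the $C^\infty$ approximation $\tilde{X}$ \emph{before} the periodic point $p$ is given, and then assert that the hyperbolic orbit through $p$ persists for $\tilde{X}$. But persistence of a given hyperbolic periodic orbit requires the perturbation to be small relative to the hyperbolicity strength of that particular orbit, and since $\tau$ may be arbitrarily large (condition i) bounds only the angle, not the eigenvalue gap), the required closeness of $\tilde{X}$ to $X$ depends on $p$. Yet $m$ and $T$ must be fixed before $p$ is handed to you, and in your scheme they are extracted from $\tilde{X}$. The clean fix is to observe that the constants $m$, $K$, $\alpha$ and hence $T$ produced in the proofs of Lemmas~\ref{interchange}--\ref{perturb} depend on the vector field only through $C^1$-continuous data such as $j=\|DX\|+1$ and the flowbox bound $C$ of Lemma~\ref{trivial}; one may therefore choose $m$ and $T$ once, working uniformly for all $C^\infty$ fields in a fixed $C^1$-ball around $X$, and only afterwards, given $p$, pick $\tilde{X}$ inside that ball close enough for the orbit through $p$ to persist. (A second minor point: after continuation the orbit lies at a nearby point with a nearby period, not literally at $p$ with period exactly $\tau$; this is harmless for the only use of the proposition, in the proof of Theorem~\ref{main}.)
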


\medskip

\section{Proof of the main theorems}

There are several different ways of defining the \emph{topological dimension}
of topological space $X$, which we shall denote by $\mbox{dim}(X)$.
For separable metrizable spaces all these definitions are equivalent.
The topological dimension is, of course, a topological invariant.
Reference~\cite{HW} is a beautiful book on this subject.
We need here a theorem of Edward Szpilrajn
relating topological dimension
with Lebesgue measure. See the proof in~\cite{HW}.

\begin{theorem}[E. Szpilrajn]\label{dim_top}
Given $X\subset  \R^n$, if $X$ has zero Lebesgue measure then
$\mbox{dim}(X)<n$.
\end{theorem}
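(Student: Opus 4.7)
The plan is to proceed by induction on $n$, using the small inductive dimension $\mbox{ind}$, which agrees with $\mbox{dim}$ on separable metrizable spaces: $\mbox{ind}(Y)\le k$ means every $y\in Y$ admits arbitrarily small neighborhoods (in $Y$) whose relative boundary inside $Y$ has dimension $\le k-1$. It suffices to establish $\mbox{ind}(X)\le n-1$. For the base case $n=1$, a measure-zero set $X\subset \R$ contains no interval, so given $x\in X$ and $\epsilon>0$ we may pick $r\in (0,\epsilon)$ arbitrarily small with $x\pm r\notin X$; then $X\cap (x-r,x+r)$ is clopen in $X$, which yields $\mbox{ind}(X)\le 0$.

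For the inductive step, assume the statement for subsets of Euclidean spaces of dimension $<n$, and hence, by passing through smooth charts, for subsets of smooth $(n-1)$-manifolds. Let $X\subset \R^n$ have zero Lebesgue measure, and fix $x\in X$ and $\epsilon>0$. Writing the volume element in polar coordinates centered at $x$, Fubini's theorem gives that for almost every $r\in (0,\epsilon)$ the intersection $X\cap \partial B(x,r)$ has zero $(n-1)$-dimensional surface measure on the sphere $S_r:=\partial B(x,r)$. Fix such an $r$: locally $S_r$ is diffeomorphic to an open set of $\R^{n-1}$, and since the chart has bounded Jacobian, the image of $X\cap S_r$ in each chart has Lebesgue measure zero in $\R^{n-1}$. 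The inductive hypothesis then yields $\mbox{dim}(X\cap S_r)\le n-2$. Since the boundary of the neighborhood $B(x,r)\cap X$ of $x$ inside $X$ is contained in $X\cap S_r$, we conclude $\mbox{ind}(X)\le n-1$, as required.

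The only mildly delicate point is the transfer, through local charts, of the zero-measure property between the sphere $S_r$ and Euclidean $\R^{n-1}$; this is routine given the smoothness and locally bounded Jacobian of the charts. The remainder is standard Menger-Urysohn dimension theory combined with a Fubini-type argument in polar coordinates, and this is the structural framework one expects to find formalized in the reference~\cite{HW}.
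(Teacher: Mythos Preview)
Your argument is correct and is essentially the classical proof one finds in \cite{HW}: induction on $n$ via the small inductive dimension, together with a Fubini/polar-coordinates slicing to locate, around each point of $X$, arbitrarily small spheres on which the trace of $X$ has zero $(n-1)$-dimensional measure, so that the inductive hypothesis bounds the dimension of the relative boundary. The paper itself does not supply a proof of this statement but merely cites \cite{HW}, so there is no independent argument in the paper to compare against; your write-up is precisely what that citation is pointing to.
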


\medskip

\begin{proof}(of Theorem~\ref{intvazio}) 
Let $X$ be a class $C^1$ zero divergence vector field on
a compact boundaryless Riemannian manifold $M$.
Assume $X$ is not Anosov. Then, with a first small perturbation,
we can assume that $X$ is not in the boundary of Anosov vector fields
of $M$. By~\cite{Z}, we know that $\mathfrak{X}_{\mu}^{2}(M)$ is $C^{1}$-dense in
$\mathfrak{X}_{\mu}^{1}(M)$, where $\mu$ stands for the Riemannian volume of $M$.
Thus, given $\epsilon>0$, we can take a class $C^2$  vector field 
$X_1\in \mathfrak{X}_{\mu}^{2}(M)$ such that $\nrm{X-X_1}_{C^1}<\epsilon$
and $X_1$ is still not Anosov.
Taking $\epsilon$ small enough, the vector field $X_1$ has an invariant
hyperbolic set $\Lambda_1$ which is homeomorphically equivalent to $\Lambda$.
By the $3$-dimensional and continuous-time version of Theorem 14 of~\cite{BV3} (see~\cite{B2} and the references therein) it follows that $\Lambda_1$ has zero Lebesgue measure, and, applying Theorem~\ref{dim_top}, $\mbox{dim}(\Lambda_1)<3$.
Since $\Lambda$ and $\Lambda_1$ are homeomorphic, 
we derive that $\mbox{dim}(\Lambda)<3$.
But we would have $\mbox{dim}(\Lambda)=3$
in case $\Lambda$ has non-empty interior.
Therefore, $\Lambda$ has empty interior.
\end{proof}

\begin{remark}
Exactly the same argument can be used to prove
Proposition 1 of~\cite{X}, using Theorem 14 in~\cite{BV3}. Theorem~\ref{intvazio} should also be true for $n$-dimensional divergence-free vector fields by just adapting to the continuous-time setting the Theorem 14 of~\cite{BV3}. 
\end{remark}

\begin{proof}(of Theorem~\ref{main})
Let $\mathcal{KS}\subset \mathfrak{X}^{1}_{\mu}(M)$  be the residual set
given by Robinson's conservative version of Kupka-Smale theorem, see~\cite{R}.
Consider also the residual set $\mathcal{P}$ given by Pugh's general density theorem~\cite{PR}, say of divergence-free vector fields $X$ such that $\overline{Per(X)}=\Omega(X)=M$ (where the last equality follows by Poincar\'e recurrence).
Take any vector field $X\in \mathfrak{X}^{1}_{\mu}(M)$
which is not an Anosov. Through a first small perturbation we may, and will,
 assume that 
$X\in \mathcal{KS}\cap \mathcal{P}$ and $X$ is not Anosov.
Take  $p\in U\subseteq M$ where $U$ is an open set.
Now, given $\epsilon>0$ take $\theta=\theta(\epsilon,X)>0$
according to Proposition~\ref{propA}, and $m=m(\epsilon,\theta)\in\N$ and $T(m)$,
satisfying Proposition~\ref{propB}.
We consider four cases:
\begin{description}
\item a) Some elliptic closed orbit of $X$ goes through $U$.
\item b) All closed orbits of $X$ which go through $U$ are hyperbolic, and
some of them has a small angle, less than $\theta$, between stable and unstable
directions.
\item c) All closed orbits of $X$ which go through $U$ are hyperbolic, with
angle between stable and unstable directions bounded from bellow by $\theta$,
but  some of them has not $m$-dominated splitting, i.e., it goes through
$\Delta_m(X)$.
\item d) All closed orbits of $X$ which go through $U$ are hyperbolic, with
$m$-dominated splitting, and with an
angle between stable and unstable directions bounded from bellow by $\theta$. 
\end{description}

In case a) there is nothing to prove, just let $Y=X$.
Case b) follows from Proposition~\ref{propA}. 
Analogously,  case c) follows from Proposition~\ref{propB}. 
Finally, we prove that case d) is impossible.
Let $\Lambda$ be the closure of all closed orbits of $X$ which go through $U$.
From d) it  follows that $\Lambda$ has $m$-dominated splitting.
The dimension of $M$, plus the volume-preserving and the absence of singularities assumptions, imply that
$\Lambda$ is hyperbolic for the linear Poincar\'e flow, thus hyperbolic for the flow (see Remark~\ref{Doering}).
Because $X\in\mathcal{P}$, we must have $U\subseteq \Lambda$.
By Theorem~\ref{intvazio}, then $X$ must be Anosov, which contradicts
the assumption on $X$.
\end{proof}

\medskip

Let $\mathcal{N}$ be the complement of the $C^1$-closure of Anosov systems
in $\mathfrak{X}^{1}_{\mu}(M)$, and  $\Phi$ be the subset of  $\mathfrak{X}^{1}_{\mu}(M)\!\times\!M\!\times \R_+$
of all triples  $(X,x,\epsilon)$ such that
$X$ has a closed elliptic orbit going through the ball $B(x,\epsilon)$.
This set is open, because elliptic orbits are stable.

Given any open set  $\mathcal{U}\subseteq \mathcal{N}$ 
consider the following (also open) set 
$$\Phi(\mathcal{U},x,\epsilon)=\{ \,Y\in \mathcal{U}\,:\; (Y,x,\epsilon)\in \Phi\;\}\;.$$

\medskip

\begin{lemma}
Given $\epsilon>0$, $p\in M$ and an open set $\mathcal{U}\subseteq\mathcal{N}$,
$\Phi(\mathcal{U},p,\epsilon)$ is an open dense subset of $\mathcal{U}$.
\end{lemma}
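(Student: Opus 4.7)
The plan is to reduce the lemma directly to Theorem~\ref{main}, with openness falling out of the already-noted openness of $\Phi\subseteq \mathfrak{X}^{1}_{\mu}(M)\!\times\!M\!\times\!\R_+$. Openness of $\Phi(\mathcal{U},p,\epsilon)$ is immediate: it is the intersection of the open set $\mathcal{U}$ with the slice $\{Y\in\mathfrak{X}^{1}_{\mu}(M)^*:(Y,p,\epsilon)\in\Phi\}$, and that slice is open because elliptic closed orbits persist under $C^1$-small conservative perturbations (which is the content of $\Phi$ being open).

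For density, fix $X\in\mathcal{U}$ and any $\eta>0$. Since $\mathcal{U}$ is $C^1$-open, pick $\delta\in(0,\eta)$ small enough that the open $C^1$-ball of radius $\delta$ around $X$ is contained in $\mathcal{U}$. Because $\mathcal{U}\subseteq\mathcal{N}$, the vector field $X$ is not in the $C^1$-closure of the Anosov vector fields; in particular, $X$ itself is not Anosov. This is precisely the hypothesis of Theorem~\ref{main}, which I apply with the open set $U=B(p,\epsilon)$, the point $p$, and perturbation size $\delta$, producing $Y\in\mathfrak{X}^{1}_{\mu}(M)^*$ with $\|X-Y\|_{C^1}<\delta$ and an elliptic closed orbit passing through $B(p,\epsilon)$.

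It remains only to verify that $Y\in\Phi(\mathcal{U},p,\epsilon)$. First, $Y\in\mathcal{U}$ by our choice of $\delta$. Second, $(Y,p,\epsilon)\in\Phi$ by construction. Hence $Y\in\Phi(\mathcal{U},p,\epsilon)$ and $\|X-Y\|_{C^1}<\eta$, which establishes density. (As a side remark: any $Y$ with an elliptic closed orbit is automatically outside the $C^1$-closure of Anosov vector fields, since elliptic orbits are $C^1$-robust and Anosov vector fields admit none; so $Y$ genuinely lies in $\mathcal{N}$, which is consistent with $Y\in\mathcal{U}$.)

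No real obstacle arises: the lemma is essentially a repackaging of Theorem~\ref{main} together with the robustness of elliptic closed orbits. The only minor bookkeeping item is to shrink the allowed perturbation size down from $\eta$ to a $\delta$ that keeps us inside $\mathcal{U}$, which is possible because $\mathcal{U}$ is open.
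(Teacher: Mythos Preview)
Your proof is correct and takes essentially the same approach as the paper, whose entire proof reads ``Follows by Theorem~\ref{main}.'' You have simply unpacked that one line: openness from the stability of elliptic orbits (noted just before the lemma), and density by applying Theorem~\ref{main} with $U=B(p,\epsilon)$ and a perturbation size small enough to stay inside $\mathcal{U}$.
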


\begin{proof}
Follows by Theorem~\ref{main}.
\end{proof}

\begin{proof}(of Theorem~\ref{main_theor})
Let $(x_n)$ be a sequence dense in $M$, and $(\epsilon_n)$ a
sequence of positive real numbers such that
$\lim_{n\rightarrow\infty} \epsilon_n=0$.
Defining recursively
$$ \mathcal{U}_0 = \mathcal{N}\qquad \text{ and } \qquad 
\mathcal{U}_{n+1}=\Phi(\mathcal{U}_n, x_n,\epsilon_n)\quad (n\geq 1)$$
the residual set
$\mathcal{R}=\cap_{n=1}^\infty \mathcal{U}_n$ 
is such that  for all $Y\in\mathcal{R}$,
the elliptic closed orbits of $Y$ are dense in $M$.
Denote by $\mathcal{A}$ the subset of all Anosov systems in $\mathfrak{X}^{1}_{\mu}(M)$.
Then $\tilde{\mathcal{R}} = \mathcal{A}\cup\mathcal{R}$ is a residual
subset of $\mathfrak{X}^{1}_{\mu}(M)$, for which the dichotomy of Theorem~\ref{main_theor}
holds.
\end{proof}


\medskip

\flushleft
{\bf M\'ario Bessa} \ \  (bessa@fc.up.pt)\\
CMUP, Rua do Campo Alegre, 687 \\ 4169-007 Porto \\ Portugal\\

\flushleft
{\bf Pedro Duarte} \ \  (pduarte@ptmat.fc.ul.pt)\\
CMAF, Faculdade de Ci\^{e}ncias da Universidade de Lisboa, Campo Grande, Edificio C6 \\ 1749-016 Lisboa \\ Portugal\\

\end{document}